\theoremstyle{theorem}
\newtheorem{thm}{Theorem}[section]
\newtheorem{thmm}{Theorem}
\newtheorem{lem}[thm]{Lemma}
\newtheorem{prop}[thm]{Proposition}
\newtheorem{defin}[thm]{Definition}
\theoremstyle{definition}
\newtheorem{rem}[thm]{Remark}
\DeclareMathAlphabet{\mathsc}{OT1}{cmr}{m}{sc}
\newcommand\cB{{\mathscr B}}
\newcommand\cC{{\mathscr C}}
\newcommand\cG{{\mathscr G}}
\newcommand\cL{{\mathscr L}}
\newcounter{saveenum}
\newcommand\bA{{\mathbb A}}
\newcommand\bC{{\mathbb C}}
\newcommand\bN{{\mathbb N}}
\newcommand\bR{{\mathbb R}}
\newcommand\B{{\mathfrak B}}
\newcommand{\measures}{\mathfrak{M}}
\newcommand{\dmeasures}{ \mathfrak{D}}
\newcommand{\Oo}{\Omega} %Branched manifold
\newcommand{\U}{\mathbf{U}} %local charts
\newcommand{\Ichart}{\mathcal{I}} %index set for charts
\newcommand{\Isubchart}{\mathcal{J}} %index set for subcharts
\newcommand{\V}[2]{\mathbf{V}_{{#1}{#2}}} %sub-local charts
\newcommand{\var}{\varphi} %coordinate map
\newcommand{\al}{\alpha} %transition function
\newcommand{\Int}[1]{\operatorname{Int} #1} %interior of a set
\newcommand{\D}{D} %closed subsets of R^d
\newcommand{\flo}[1]{\Phi^{{#1}}} % the flow
\newcommand\cF{{\mathscr F}} %folliation
\newcommand{\m}{\operatorname{\mathbf{m}}} %reference measure on \Oo
\newcommand{\Id}{\mathbf{id}}   %the identity operator
\newcommand{\TotalVar}[1]{\norm{\smash{ #1 }}_{\measures}}   %Total variation norm
\newcommand{\supnorm}[1]{{\lvert{#1}\rvert}_{\infty}}
\newcommand{\norm}[1]{\left\lVert{#1}\right\rVert}
\newcommand{\abs}[1]{\left\lvert{#1}\right\rvert}
\newcommand{\R}[1]{\mathcal{P}(#1)}
\begin{document}
\author{Oliver Butterley}
\email{oliver.butterley@gmail.com}
\address{Oliver Butterley\\
Dipartimento di Matematica\\
II Universit\`{a} di Roma (Tor Vergata)\\
Via della Ricerca Scientifica, 00133 Roma, Italy.}
\keywords{expanding flow, transfer operator, branched manifold, spectral gap, one-parameter semigroup}
%\subjclass[2000]{Primary:  37D50  ;   Secondary: 47D06,  37A05,  37E35}
\title[Expanding Semiflows on Branched Surfaces]{Expanding Semiflows on Branched Surfaces and   One-Parameter Semigroups of Operators}
%\date{  \today. {\itshape Filename}: \jobname.tex. }
\thanks{It is a pleasure to thank Carlangelo Liverani for many helpful discussions and comments. Research partially supported by the ERC Advanced Grant MALADY (246953). I am indebted  to Stefano Luzzatto for invaluable assistance during a period of many years.  I am grateful  to the library at ICTP where much of this work was done.}

\maketitle

\thispagestyle{empty}

\begin{abstract}
We consider expanding semiflows on  branched surfaces. The family of transfer operators associated to the semiflow is a one-parameter semigroup of operators. The  transfer operators may also be viewed as an operator-valued function of time and so, in the appropriate norm, we may consider the vector-valued Laplace transform of this function. We obtain a spectral result on these operators and relate this to the spectrum of the generator of this semigroup. Issues of strong continuity of the semigroup are avoided. The main result is the improvement to the machinery associated with studying semiflows as one-parameter semigroups of operators and the study of the smoothness properties of semiflows defined on branched manifolds, without encoding as a suspension semiflow.
\end{abstract}

\section{Introduction}
Flows associated to vector fields were one of the principle origins of the study of ergodic theory and dynamical systems and are indeed of foremost importance. Frequently they are not at all simple to analyse.  Certain deceptively simple systems of differential equations and the associated flows  still prove extremely difficult to understand.
In the past many questions concerning flows were intractable with the technology available and much progress was made by first reducing to a discrete dynamical systems by considering Poincar\'e sections and encoding the flow as a suspension over the discrete-time dynamical system.  

In the study of the statistical properties of discrete-time dynamical systems a major technological success of the last thirty years was the development of ideas to apply functional analysis directly to the system. This was developed by a long list of people but particularly by the pioneering work of Lasota-Yorke \cite{lasota1981led}  and subsequent development by Keller (see \cite{LiView} and references within for a more complete history). In this approach one typically considers a linear operator called the ``transfer operator'' acting on  a certain well chosen Banach space and then deduces information concerning the statistical properties from information concerning the spectrum of the operator.

When Liverani studied the rate of mixing of  contact Anosov flows  \cite{Li1} he showed that the family (parametrized by time) of transfer operators associated to a flow can be viewed as a strongly-continuous one-parameter semigroup acting on a well chosen Banach space. This had the benefit of allowing one to study  the flow directly without first encoding to a suspension flow and again apply the breakthrough work of Dolgophyat  \cite{D} on the oscillatory cancelation mechanism. This seemed like a point of view which had great potential and indeed these ideas have since been proven useful. In particular they  have  helped deduce behaviour of the invariant measure  of an Anosov flow  under perturbations \cite{BL}, to study the rate of mixing for piecewise  cone-hyperbolic contact flows \cite{BaLiPwCone} and to study dynamical zeta functions, again for contact Anosov flows \cite{Giulietti:fk}. We remark that although studying the flow by considering the associated one-parameter semigroup of operators seems promising it is not the only possibility and Tsujii has demonstrated  \cite{Ts,tsujii2008qct,Tsujii:2011uq} that, certainly in the case of smooth expanding maps of the circle and contact Anosov flows, it is possible to study directly the transfer operator associated to the time-one map of the flow.

Many statistical properties of many diverse classes of flows remain as open questions, for example rates of mixing for the Sinai billiard flow and the Lorenz flow. These are both flows which are simple to define but whose statistical properties remain elusive (the corresponding questions for the Poincar\'e return maps associated to these flows are relatively well understood).  From a technological point of view several issues must be better understood if we wish to extend our techniques to more general classes of flows, in particular understanding how to deal with discontinuities. 
A weight of evidence suggests that ``good statistical properties''  like exponential decay of correlation and continuous, or even differential, dependence of the invariant measure under perturbations, are the consequence of the smoothness of the system.  The aim here is to use as much as possible the available smoothness of the system to deduce statistical properties in the situations where there is a limited degree of smoothness. In particular this is why we avoid the approach of reducing to a suspension flow which  artificially reduces the smoothness of the system.

We believe there are many benefits to streamlining and optimising the current technology to facilitate its use in more difficult settings. 
 As mentioned above, there is now a very precise understanding  of $\cC^{r}$ Anosov flows and extremely precise spectral results, however, from a physical point of view such  boundary-less smooth systems seem unrealistic. Here we wish to consider the more realistic systems which only satisfy  significantly weaker regularity assumptions.
As such we study a relatively simple model, although much subtle and complex behaviour is visible and the results are indeed new.
We study semiflows associated to $\cC^{2}$ vector fields on two-dimensional branched manifolds (branched surfaces), possibly with boundary. 
Despite the smoothness of the flow  discontinuities are introduced because the flow is supported on a manifold with boundary.   
The existence of branches allows the semiflows to be non-invertible, i.e. they really are semiflows and not flows. We suppose these semiflows are uniformly expanding in a sense made precise below.
We develop the theory of the one-parameter semigroup of transfer operators associated with these semiflows and we make several improvements and observations from a technical point of view. We achieve a spectral decomposition of the (operator-valued) Laplace transform of the transfer operator. We show that the issue of the strong continuity of the semigroup can be easily avoided. Furthermore we demonstrate an approach which means that it should be possible to study also perturbations of the flow on the same Banach space, even in the case of flows with discontinuities, and so studying the behaviour of the statistical properties under the perturbation is made possible.
We remark that the operator-theory framework presented in the following section is essentially independent of the present application to expanding flows on branched manifolds and should be applicable, with the appropriate choice of dynamically relevant Banach space, to many other settings.

\section{Results}

We suppose that $\Oo$ is a 2-dimensional $\cC^{2}$ branched manifold, possibly with boundary and with finite branches. Definitions and notation concerning branched manifolds and their differential structure are given in Section~\ref{sec:branchedmanifolds}. In summary a branched manifold possesses a differential structure in much the same way as a Riemannian manifold, in particular tangent space $T_{p}\Oo$ is uniquely defined at each point $p\in \Oo$ and there is an inner product $\langle \cdot, \cdot \rangle$ for the tangent space which allows us to discuss orthogonality and consequently a norm $\norm{\cdot}$.
We  suppose  that we are given a vector field $X\in T\Oo$  which is $\cC^{2}$ and  such that the associated semiflow
\[
\flo{t}: \Oo \to \Oo, \quad \quad t\geq 0
\]
is globally defined. By \emph{semiflow} we mean, as usual, that  $\Phi : \Oo \times \bR_{{+}} \to \Oo$ which we write as  $\Phi : (p,t) \mapsto \flo{t}(p)$ and which satisfies $\flo{0} = \Id$ and $\flo{t} \circ \flo{s} = \flo{t+s}$ for all $t,s \geq 0$. 

We also require that $ \flo{t} $ is uniformly expanding as made precise in the following.
To characterise hyperbolicity for systems which are either not invertible or have discontinuities it is not possible to use the notion of an invariant and  hyperbolic splitting of tangent space. One possibility would be to use the notion of conefields. However we opt for yet another alternative which is most suitable for this particular setting.
We suppose there exists an orientatable $\cC^{2}$ foliation of $\Oo$ which we denote $\cF_{V}$ such that the following three properties hold.
\begin{enumerate}
\item
The leaves of $\cF_{V}$ are all one-dimensional curves with end points contained in $\partial \Oo$ and of length greater than $\delta$ for some constant $\delta>0$,
\item
The leaves of $\cF_{V}$ are uniformly transversal to the flow direction,
 \setcounter{saveenum}{\value{enumi}}
\end{enumerate}
and, letting $V$ denote the unit vector field tangent to the foliation $\cF_{V}$,  we suppose that there exists constants  $C<\infty$, $\lambda>0$ such that
\begin{enumerate}
  \setcounter{enumi}{\value{saveenum}}
\item
$\langle  D\flo{t} u , V \rangle \geq C^{-1} e^{\lambda t} \langle u , V \rangle$ for all $u\in T_{p}\Oo$, $p\in \Oo$ and $t\geq0$.
\end{enumerate}
Note well that the foliation $\cF_{V}$ will not be invariant under the action of the flow, except in extremely special (non-mixing) cases. The foliation in merely more-or-less in the expanding direction of the flow. Also note that the assumption of the existence of the flow and of it being uniformly expanding in the above sense puts significant restrictions on the branched manifold. For example the flow lines at the boundary must be tangent to the boundary. Many branched manifolds cannot support such flows.

%
%To characterise hyperbolicity for systems which are either not invertible or have discontinuities or both it is preferable, if not essential, to use the notion of conefields. A cone at a point $p\in \Oo$ is a closed subset $\cK \subset T_{p}\Oo$ with nonempty interior and invariant under scalar multiplication such that $1$ is the maximal dimension of a vector subspace included in $\cK$. A conefield is a family $\cK(p)$ which assigns a cone to each $p\in \Oo$. We say that  the conefield $\cK(p)$  is uniformly transverse to the flow direction if $\inf_{p\in \Oo}\operatorname{dist}( X(p), \cK(p)) >0$.
%Finally we say that the flow is uniformly expanding if there exists a continuous  conefield $\cK(p)$ which is uniformly transverse to the flow direction  and  such that $D\flo{t} (\cK(p)) \subseteq \cK(\flo{t}p)$ and, for some constants $C>0$, $\lambda>0$, that
%\[
%\norm{ \smash{D\flo{t} u}} \geq C e^{\lambda t} \norm{ u} \quad \text{for all $u \in \cK({p})$},
%\]
% for all $t\geq 0$, $p\in \Oo$.
% 
% 

From this point onwards we assume always that the semiflow $\flo{t} : \Oo \to \Oo$ is  a uniformly expanding semiflow on a two dimensional branched manifold as described above.

\begin{figure}[tbp]
\begin{center}
\includegraphics[scale=0.55]{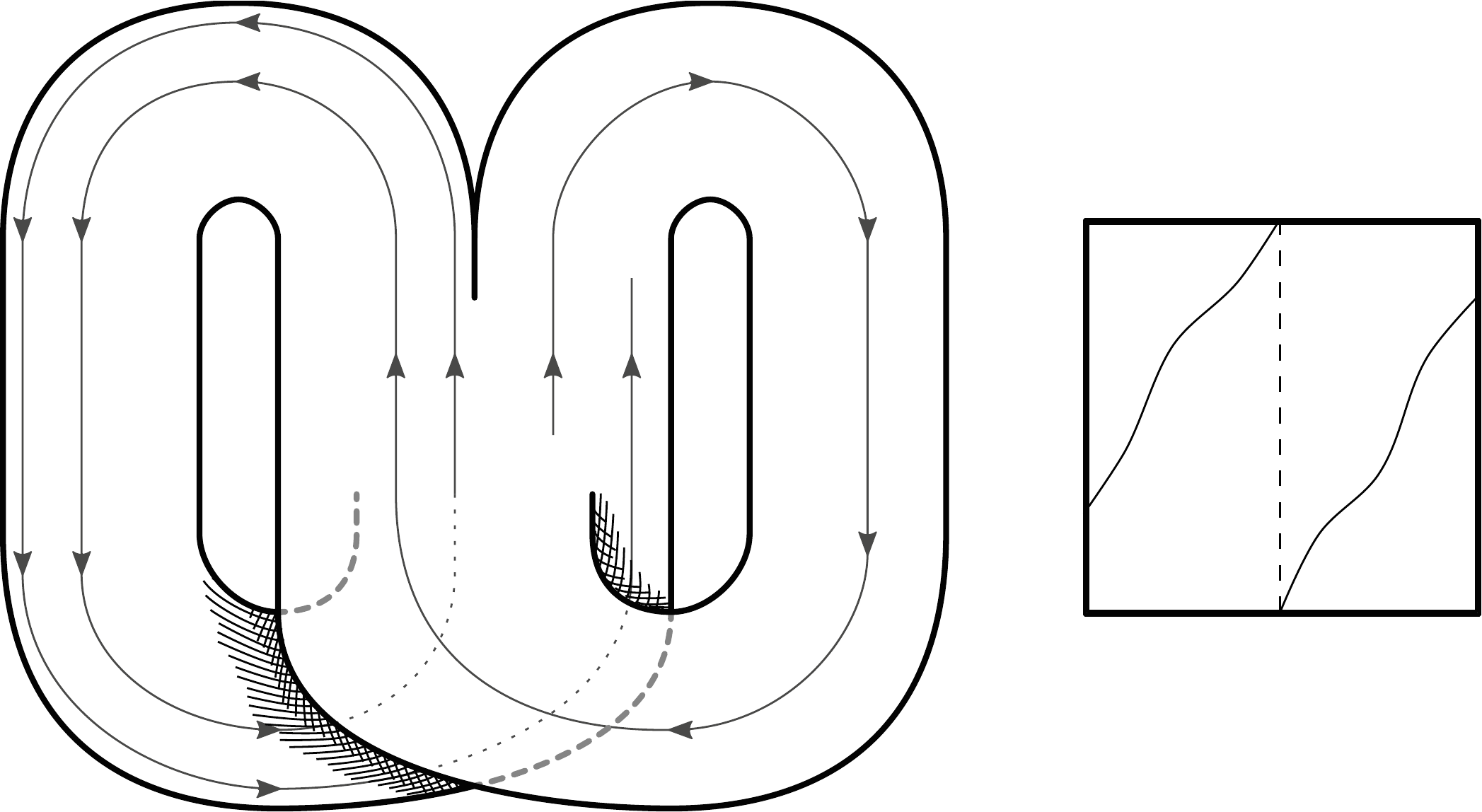}
\caption{{The prototype of a branched manifold which supports an expanding semiflow. A piece of an orbit of the flow is displayed (the line with the arrows).  The semiflow fails to be invertible. 
There is a single branch line, horizontally across the central part where the left and right surfaces join.
To the right is the one-dimensional Poincar\'e return map produced by taking the horizontal line across the middle as the Poincar\'e section.}}
\label{fig:branchedmanifold}
\end{center}
\end{figure}

The branched manifold $\Oo$ is a measure space when equipped with the Borel $\sigma$-algebra.  Let $\measures$ denote the space of complex measures on $\Oo$.  This is the dual of $\cC_{0}(\Oo)$, the Banach space of continuous complex-valued functions with  support contained within some open subset of $\Oo$, equipped  with the supremum norm  $\supnorm{\eta}:= \sup\{ \abs{\eta(p)}: p\in \Oo\}$. 
For each  $\mu \in \measures$  let $\TotalVar{\mu}=\sup\{ \abs{ \mu(\eta)}: \eta\in \cC(\Oo), \supnorm{\eta}\leq 1\}$. The space $(\measures, \TotalVar{\cdot})$ is a Banach space. Note that $\TotalVar{\cdot}$ is exactly the standard total variation which is historically denoted by $\abs{\cdot}\!(\Oo)$ but for clarity and consistency in the following we use the norm style notation.
We will refer to the elements of this space as measures  and omit  explicit mention that they are complex measures. 
For later use let $\cC(\Oo)$ (as opposed to $\cC_{0}(\Oo)$) denote the space of all continuous complex-valued functions on $\Oo$.
For each fixed $t\geq 0$ the flow $\flo{t}:\Oo \to \Oo$ is a measurable map and so defines the push-forward in the space of measures.
\[
\flo{t}_{*} : \measures \to \measures, \quad \flo{t}_{*}\mu(\eta) := \mu(\eta \circ \flo{t})
\]
for all  $\eta \in \cC(\Oo)$. This family of linear operators is a one-parameter semigroup since $\flo{0}_{*} =\Id$ and it has the semigroup property, inherited  from the semiflow, that
\begin{equation}\label{eq:semigroupproperty}
\flo{t+s}_{*} = \flo{t}_{*} \circ \flo{s}_{*} \quad \quad \text{for all $t,s\geq 0$}.
\end{equation}
At this stage we make no claims on the continuity of this semigroup with respect to the parameter $t$. This is however an important issue that we return to later on.
They are  a family of bounded linear operators $\flo{t}_{*} : \measures \to \measures$.
We use the standard notation for the operator norm. I.e. if $\cL:\measures \to \measures$ is a linear operator then $\TotalVar{\cL} := \sup\{ \TotalVar{\cL\mu} : \TotalVar{\mu}\leq 1\}$.
\begin{lem}\label{lem:TotalVarBound}
$\TotalVar{\flo{t}_{*}} =1$ for all $t\geq 0$.
\end{lem}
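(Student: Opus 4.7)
The plan is to establish matching upper and lower bounds, the statement being essentially the standard fact that push-forward by a measurable map is a weak contraction on the space of complex measures, with equality realised on any probability measure. I expect no serious obstacle; the whole argument is a few lines.

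For the upper bound $\TotalVar{\flo{t}_{*}} \leq 1$, I would unwind the dual definition of the total variation norm. For any $\mu \in \measures$ and any test function $\eta \in \cC(\Oo)$ with $\supnorm{\eta} \leq 1$, the composition $\eta \circ \flo{t}$ is again continuous on $\Oo$ (the semiflow is continuous, being generated by the $\cC^{2}$ vector field $X$; at branch lines the forward flow merges branches rather than splitting them, so continuity is preserved) and trivially $\supnorm{\eta \circ \flo{t}} \leq \supnorm{\eta} \leq 1$. Hence
\[
\abs{\flo{t}_{*}\mu(\eta)} = \abs{\mu(\eta \circ \flo{t})} \leq \TotalVar{\mu},
\]
and taking the supremum first over such $\eta$ and then over $\mu$ with $\TotalVar{\mu} \leq 1$ yields $\TotalVar{\flo{t}_{*}} \leq 1$.

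For the matching lower bound, I would test on a Dirac mass: for any $p \in \Oo$ the measure $\delta_{p}$ satisfies $\TotalVar{\delta_{p}} = 1$ and its push-forward is $\flo{t}_{*}\delta_{p} = \delta_{\flo{t}(p)}$, so $\TotalVar{\flo{t}_{*}\delta_{p}} = 1$. Equivalently, one could pair any probability measure $\mu$ with the constant test function $\eta \equiv 1 \in \cC(\Oo)$ and observe $\flo{t}_{*}\mu(\Oo) = \mu(\flo{t}^{-1}\Oo) = \mu(\Oo)$. Either way one obtains $\TotalVar{\flo{t}_{*}} \geq 1$, and combining with the upper bound proves the lemma. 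The only point meriting any verification is the admissibility of $\eta \circ \flo{t}$ as a test function, which reduces to continuity of $\flo{t}$ on the branched manifold; beyond this the argument is formal.
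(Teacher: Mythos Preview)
Your proof is correct and follows essentially the same strategy as the paper: the dual definition of $\TotalVar{\cdot}$ for the upper bound, and the constant test function $\eta\equiv 1$ for the lower bound. The only differences in execution are that the paper, rather than asserting continuity of $\eta\circ\flo{t}$, notes merely that it is bounded and measurable and invokes Lusin's theorem to pass to continuous test functions, and that for the lower bound the paper phrases the same computation as showing that $\ell(\mu):=\mu(1)$ is an eigenvector of the dual operator $(\flo{t}_{*})^{*}$ with eigenvalue $1$, hence $1$ lies in the spectrum; your Dirac-mass argument is a more concrete realisation of the same idea.
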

\begin{proof}
For any $\mu \in \measures$ then $\TotalVar{\flo{t}_{*}\mu} = \sup\{  \abs{\mu(\eta\circ \flo{t})}: \eta\in \cC(\Oo), \supnorm{\eta}\leq 1\}$. 
That $\abs{\eta }_{\cC^{0}}\leq 1$  implies that $\abs{\eta \circ \flo{t} (p)}\leq 1$ for each $p\in \Oo$  and also $\eta \circ \flo{t} $ is measurable and so by Lusin's Theorem   $\TotalVar{\flo{t}_{*}\mu} \leq \TotalVar{\mu}$. We have shown that $\TotalVar{\flo{t}_{*}} \leq1$ for all $t\geq 0$. Define the linear functional $\ell \in \measures^{*}$ by $\ell(\mu):= \mu(1)$ for all $\mu\in \measures$.  Note that $\ell(\flo{t}_{*}\mu):= \flo{t}_{*}\mu(1) = \mu(1)$ and so $1$ is an eigenvalue for the dual operator and consequently $1$ is in the spectrum of $\flo{t}_{*}:\measures \to \measures$.
\end{proof}

For flows it tends to be difficult to study the operator $\flo{t}_{*}$ directly and we introduce a related family of operators in the following. First some notation: For any pair of Banach spaces $\mathfrak{A}$, $\B$ we use the notation $ \cB(\mathfrak{A}, \B)$ to denote the space of bounded linear operators from $\mathfrak{A}$ to $\B$.
By Lemma~\ref{lem:TotalVarBound} we know that $\int_{0}^{\infty} e^{-\Re(z)t} \TotalVar{\flo{t}_{*}} \ dt  <\infty$ for all  $\Re(z)>0$ and so the function $t\mapsto e^{-zt} \flo{t}_{*} \in \cB(\measures, \measures)$ is Bochner integrable \cite[\S V.5]{MR1336382}. We  define the operator $\R{z}: \measures \to \measures$ by
\begin{equation}\label{eq:defRz}
\R{z}  := \int_{0}^{\infty} e^{-zt} \flo{t}_{*}  \ dt, \quad \text{for all $\Re(z)>0$}.
\end{equation}
In the following we see that $\R{z}$ is  a pseudo-resolvent, a consequence of  $ t\mapsto \flo{t}_{*} $ having the semigroup property \eqref{eq:semigroupproperty}.
\begin{lem}\label{lem:resolventequation}
For all $\Re(z)>0$, $\Re(\zeta)>0$ then
$(z-\zeta) \R{\zeta} \R{z} = \R{\zeta} - \R{z}$.
\end{lem}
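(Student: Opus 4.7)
The plan is to establish the identity by direct computation from the definition \eqref{eq:defRz}, exploiting the semigroup property \eqref{eq:semigroupproperty} together with Fubini and a change of variables. The only genuinely non-formal ingredient is showing that Fubini applies to the Bochner integrals in question, and that relies on the uniform bound $\TotalVar{\flo{t}_*}=1$ from Lemma~\ref{lem:TotalVarBound}.

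First I would expand the product of resolvents using the definitions, giving
\[
\R{\zeta}\R{z} = \int_{0}^{\infty}\!\!\int_{0}^{\infty} e^{-\zeta s - z t}\, \flo{s}_{*}\flo{t}_{*}\, ds\, dt.
\]
To justify interchanging the composition and the integrals, as well as invoking Fubini in what follows, I would note that the integrand $(s,t)\mapsto e^{-\zeta s - z t}\flo{s+t}_{*}$ takes values in $\cB(\measures,\measures)$ with operator norm bounded by $e^{-\Re(\zeta)s - \Re(z)t}$, which is integrable over $\bR_{+}^{2}$ since $\Re(z),\Re(\zeta)>0$. This places us squarely within the Bochner framework (cf.\ \cite[\S V.5]{MR1336382}) and allows the semigroup identity $\flo{s}_{*}\flo{t}_{*} = \flo{s+t}_{*}$ to be applied pointwise inside the integral.

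Next I would perform the change of variables $u = s+t$ with $s$ fixed, yielding
\[
\R{\zeta}\R{z} = \int_{0}^{\infty}\!\!\int_{s}^{\infty} e^{-(\zeta - z)s}\, e^{-z u}\, \flo{u}_{*}\, du\, ds,
\]
and then swap the order of integration so that for each $u$ the variable $s$ ranges over $[0,u]$. The inner integral $\int_{0}^{u} e^{-(\zeta-z)s}\, ds = (1 - e^{-(\zeta-z)u})/(\zeta-z)$ can be evaluated explicitly (assuming $z\neq \zeta$; the case $z=\zeta$ is trivial since both sides of the claimed identity vanish). Substituting gives
\[
\R{\zeta}\R{z} = \frac{1}{\zeta - z}\int_{0}^{\infty}\bigl(e^{-zu} - e^{-\zeta u}\bigr)\flo{u}_{*}\, du = \frac{\R{z} - \R{\zeta}}{\zeta - z},
\]
and rearranging produces the stated identity $(z-\zeta)\R{\zeta}\R{z} = \R{\zeta} - \R{z}$.

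The only step that requires any care is the application of Fubini and the change of order of integration for $\cB(\measures,\measures)$-valued integrands; I expect this to be the main technical point, but it is handled cleanly by the dominated bound on the operator norm coming from Lemma~\ref{lem:TotalVarBound}. Everything else is an elementary manipulation of exponentials.
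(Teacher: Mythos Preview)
Your proof is correct and follows essentially the same route as the paper's: write $\R{\zeta}\R{z}$ as a double Bochner integral, invoke the semigroup property, change variables to $u=s+t$, and apply Fubini (justified by the norm bound from Lemma~\ref{lem:TotalVarBound}). The only difference is that you swap the order of integration \emph{before} evaluating the $s$-integral, obtaining the finite integral $\int_0^u e^{-(\zeta-z)s}\,ds$; this neatly sidesteps the paper's without-loss-of-generality assumption $\Re(\zeta-z)>0$, which it needs in order for the separated integral $\int_0^\infty e^{-(\zeta-z)s}\,ds$ to converge.
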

\begin{proof}
Without loss of generality  we assume that $\Re(\zeta - z)>0$ and $\zeta \neq z$.
By definition~\eqref{eq:defRz} for all $\mu\in \measures$
\[
\R{\zeta} \R{z} \mu = \int_{0}^{\infty} \int_{0}^{\infty} e^{-\zeta s} e^{-z t} \flo{t+s}_{*} \mu \ dt \ ds.
\]
Changing variables $u=s+t$, splitting the integral into two pieces and then swapping the two integral in the second piece we have
\[\begin{split}
\R{\zeta} \R{z} \mu &= \int_{0}^{\infty} \int_{s}^{\infty}e^{-(\zeta-z) s} e^{- z u} \flo{u}_{*} \mu \ du \ ds\\
 &= \int_{0}^{\infty}  e^{-(\zeta-z) s}  \ ds  \  \R{z} \mu 
 -  \int_{0}^{\infty} \int_{0}^{s}e^{-(\zeta-z) s} e^{- z u} \flo{u}_{*} \mu \ du \ ds\\
 &= \int_{0}^{\infty}  e^{-(\zeta-z) s}  \ ds  \  \R{z} \mu 
 -  \int_{0}^{\infty} \left( \int_{u}^{\infty}e^{-(\zeta-z) s}  \  ds  \right) e^{- z u} \flo{u}_{*} \mu \ du .
\end{split}\]
Since $ \int_{0}^{\infty}  e^{-(\zeta-z) s}  \ ds  = (\zeta - z)^{-1}$ and $  \int_{u}^{\infty}e^{-(\zeta-z) s}  \  ds = (\zeta -z)^{-1}  e^{-(\zeta-z) u} $ then the above calculation implies that $\R{\zeta} \R{z} \mu = (\zeta - z)^{-1} \left(  \R{\zeta} -  \R{z} \right)$ as required.
\end{proof}

\begin{lem}\label{lem:boundRz1}
$\TotalVar{\R{z}} \leq \Re(z)^{-1}$ for all $\Re(z)>0$. 
\end{lem}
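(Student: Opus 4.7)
The plan is to apply the standard triangle-inequality estimate for Bochner integrals to the defining formula \eqref{eq:defRz}. Specifically, for any $\mu\in\measures$ and any $\Re(z)>0$, the Bochner integrability of $t\mapsto e^{-zt}\flo{t}_{*}\mu$ (already established in the paragraph preceding \eqref{eq:defRz}) gives
\[
\TotalVar{\R{z}\mu}
= \TotalVar{\int_{0}^{\infty} e^{-zt}\flo{t}_{*}\mu \ dt}
\leq \int_{0}^{\infty} \TotalVar{e^{-zt}\flo{t}_{*}\mu} \ dt
= \int_{0}^{\infty} e^{-\Re(z)t}\TotalVar{\flo{t}_{*}\mu} \ dt,
\]
where the factor $e^{-\Re(z)t}$ comes out because $\abs{e^{-zt}}=e^{-\Re(z)t}$.

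Next I would invoke Lemma~\ref{lem:TotalVarBound}, which yields $\TotalVar{\flo{t}_{*}\mu}\leq\TotalVar{\mu}$ for every $t\geq 0$, and then carry out the elementary exponential integral $\int_{0}^{\infty} e^{-\Re(z)t}\ dt = \Re(z)^{-1}$. Combining these gives $\TotalVar{\R{z}\mu}\leq \Re(z)^{-1}\TotalVar{\mu}$, and taking the supremum over $\TotalVar{\mu}\leq 1$ delivers the claimed operator-norm bound.

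There is essentially no obstacle here; the only thing to be a little careful about is the first inequality, which is the Bochner-integral version of the triangle inequality (see \cite[\S V.5]{MR1336382}, already cited just before \eqref{eq:defRz}). Everything else is $L^{1}$-calculus on $[0,\infty)$ combined with the contractivity bound from Lemma~\ref{lem:TotalVarBound}.
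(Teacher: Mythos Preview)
Your proof is correct and essentially identical to the paper's argument: the paper simply writes the same Bochner triangle inequality at the operator-norm level, $\TotalVar{\R{z}}\leq\int_{0}^{\infty}e^{-\Re(z)t}\TotalVar{\flo{t}_{*}}\,dt$, then invokes Lemma~\ref{lem:TotalVarBound} and evaluates the exponential integral. The only difference is that you work with a fixed $\mu$ and take the supremum at the end, which is a purely cosmetic variation.
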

\begin{proof}
Note that $\TotalVar{\R{z} } \leq \int_{0}^{\infty} e^{-\Re(z)t} \TotalVar{\flo{t}_{*} } \ dt$ by \eqref{eq:defRz}.
We use the bound on $\TotalVar{ \flo{t}_{*} }$ from Lemma~\ref{lem:TotalVarBound} and integrate $\int_{0}^{\infty} e^{-\Re(z)t}  \ dt = \Re(z)^{-1}$.
\end{proof}

\begin{lem}\label{lem:Rzn}
For each  $n\in \bN$ and $z\in \bC$ with $\Re(z)>0$ 
\begin{equation*}
\R{z}^{n}  =\frac{ 1 }{(n-1)!}  \int_{0}^{\infty} t^{n-1}e^{-zt} \flo{t}_{*}  \ dt.
\end{equation*}
\end{lem}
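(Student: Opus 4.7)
The plan is to prove the formula by induction on $n$, using the semigroup property \eqref{eq:semigroupproperty} together with Fubini's theorem for Bochner integrals.

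The base case $n=1$ is simply the definition \eqref{eq:defRz} of $\R{z}$. For the inductive step, assume the formula holds for $n$; I would write
\[
\R{z}^{n+1} = \R{z}\, \R{z}^{n} = \frac{1}{(n-1)!} \int_{0}^{\infty}\!\int_{0}^{\infty} s^{n-1} e^{-z(s+t)} \flo{t}_{*} \flo{s}_{*} \ ds\ dt,
\]
where combining the two Bochner integrals into a double integral is justified by Fubini's theorem: Lemma~\ref{lem:TotalVarBound} gives $\TotalVar{\flo{t}_*} = 1$, so the integrand has finite total variation norm in $L^{1}([0,\infty)^{2})$ whenever $\Re(z) > 0$. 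Applying the semigroup property $\flo{t}_{*} \flo{s}_{*} = \flo{t+s}_{*}$ then lets me substitute $u = s+t$ (with $t$ fixed); after swapping the order of integration so that $u$ is the outer variable and $s$ ranges over $[0,u]$, I obtain
\[
\R{z}^{n+1} = \frac{1}{(n-1)!} \int_{0}^{\infty} e^{-zu} \flo{u}_{*} \left( \int_{0}^{u} s^{n-1}\ ds \right) du = \frac{1}{n!} \int_{0}^{\infty} u^{n} e^{-zu} \flo{u}_{*} \ du,
\]
which closes the induction.

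No serious obstacle is anticipated: this is essentially the classical computation that the $n$-th power of a Laplace transform corresponds to convolution with $t^{n-1}/(n-1)!$, transplanted to the operator-valued setting. The only point deserving care is the justification of Fubini for the Bochner integral, which is immediate from the uniform bound $\TotalVar{\flo{t}_{*}}=1$ of Lemma~\ref{lem:TotalVarBound} combined with the absolute convergence of $\int_{0}^{\infty}\int_{0}^{\infty} s^{n-1} e^{-\Re(z)(s+t)}\ ds\ dt$. Strong continuity of $t \mapsto \flo{t}_{*}$ is not needed, only measurability and the norm bound, which is consistent with the paper's stated desire to avoid strong-continuity hypotheses.
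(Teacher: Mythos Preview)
Your proposal is correct and follows exactly the approach the paper indicates: induction on $n$, expanding $\R{z}^{n+1}=\R{z}\,\R{z}^{n}$ as a double Bochner integral, applying the semigroup property, changing variables to $u=s+t$, and swapping the order of integration. One small slip: you write ``with $t$ fixed'' but then keep $s$ as the inner variable ranging over $[0,u]$; the substitution should be performed with $s$ fixed (replacing $t$ by $u-s$), after which your displayed inner integral $\int_{0}^{u} s^{n-1}\,ds$ is exactly right.
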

\begin{proof}
This is a direct consequence of the definition \eqref{eq:defRz} by induction on $n$ changing variables in the double integral produced and then swapping the order of integration.
\end{proof}

The natural reference measure  is   the $2$-dimensional Hausdorff measure on $\Oo$ which we denote by $\m\in \measures$. If one decided to work with densities using the charts of the differential structure of $\Oo$ one could equivalently consider Lebesgue measure as the reference. 
We do not expect the operators $\flo{t}_{*}$ and $\R{z}$ to have good spectral properties acting on $\measures$ and we are only interested in properties which are ``physically relevant'' in the sense of relating to  measures which are absolutely continuous with respect to $\m$.
Therefore we may and it is beneficial to consider a Banach space of measures which is contained in $\measures$  and  on which $\R{z}$ acts with good spectral properties.

Let $U$ be a continuous vector field. A  measure $\mu \in \measures$ is said to be differentiable (in the sense of measures) with respect to $U$
if there exists a measure $\D_{U}\mu \in \mathfrak{M}$ such that $\mu(U\eta) = - \D_{U}\mu(\eta)$ for all $\eta \in \cC^{1}_{0}(\Oo)$. The differential of a measure is the linear map 
$\D\mu : T\Oo \times \cC(\Oo) \to \bC$, $\D\mu : (U, \eta) \mapsto \D_{U}\mu(\eta)$. 
The tangent space at each point is of course finite dimensional and so there are many equivalent possibilities for the definition of the norm.
For our purposes it is convenient to have a coordinate-independent definition of the norm.
Let $\mathfrak{X}(\Oo)$ denote the set of continuous vector fields on $\Oo$. With a slight abuse of notation let
\[
\TotalVar{\D \mu} := \sup\{  \TotalVar{\D_{U}\mu } : U\in \mathfrak{X}(\Oo), \supnorm{U} \leq 1\}.
\]
If this quantity is finite we say that  $\mu \in \measures$ is differentiable (in the sense of measures) and we let $\mathfrak{D}(\Oo)$ denote the set of all such measures. For all $\mu \in\dmeasures$ let
\[
\norm{ \mu }_{\dmeasures} := \TotalVar{\D\mu} + \TotalVar{\mu}.
\]
If we were to consider the densities of the measures this norm is nothing other that the   bounded variation norm. 
However, due to the oddities of working on a branched manifold, it is most convenient to work directly with the measures, seeing them as linear functionals, as apposed to working with the corresponding densites. 
The  Banach space $(\dmeasures, \norm{\cdot}_{\dmeasures})$ is the central component of this study.

\begin{prop}\label{prop:bounded}
There exists $C<\infty$, $\alpha<\infty$ such that $\norm{\smash{\flo{t}_{*}}}_{\dmeasures} \leq Ce^{\alpha t}$ for all $t\geq 0$.
\end{prop}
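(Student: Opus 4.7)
The plan is to bound the two parts of $\norm{\flo{t}_*\mu}_{\dmeasures} = \TotalVar{\D(\flo{t}_*\mu)} + \TotalVar{\flo{t}_*\mu}$ separately. The second term is immediate from Lemma~\ref{lem:TotalVarBound}. The real work is to show that $\TotalVar{\D(\flo{t}_*\mu)}$ is bounded by $Ce^{\alpha t}\TotalVar{\D\mu}$, the key idea being to pull the test vector field back through the flow.

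Given a continuous vector field $U$ with $\supnorm{U}\leq 1$, the map $\flo{t}$ is $\cC^2$ and its differential $\D\flo{t}|_p : T_p\Oo \to T_{\flo{t}(p)}\Oo$ is an invertible linear isomorphism at every point (the flow of a $\cC^2$ vector field being locally a diffeomorphism even across branches, since tangent spaces are uniquely defined). Hence the prescription
\[
A(p) := (\D\flo{t}|_p)^{-1}\, U(\flo{t}(p))
\]
defines a continuous vector field on $\Oo$. By the chain rule, for any $\eta \in \cC^1_0(\Oo)$,
\[
A(\eta\circ\flo{t})(p) = \D\eta|_{\flo{t}(p)}\bigl(\D\flo{t}|_p\, A(p)\bigr) = \D\eta|_{\flo{t}(p)}(U(\flo{t}(p))) = (U\eta)\circ\flo{t}(p).
\]
Assuming $\eta\circ\flo{t}$ is an admissible test function, the definition of the differential of a measure then yields
\[
\D_U(\flo{t}_*\mu)(\eta) = -\flo{t}_*\mu(U\eta) = -\mu\bigl(A(\eta\circ\flo{t})\bigr) = \D_A\mu(\eta\circ\flo{t}) = \flo{t}_*(\D_A\mu)(\eta).
\]
Taking suprema over $\eta$ (using Lemma~\ref{lem:TotalVarBound}) and then over $U$ with $\supnorm{U}\leq 1$ gives $\TotalVar{\D(\flo{t}_*\mu)} \leq \supnorm{(\D\flo{t})^{-1}}\,\TotalVar{\D\mu}$.

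The remaining step is to bound $\supnorm{(\D\flo{t})^{-1}}$ exponentially in $t$. The derivative satisfies the variational equation $\partial_t \D\flo{t} = (\D X\circ\flo{t})\,\D\flo{t}$, so by Gronwall one obtains both $\supnorm{\D\flo{t}}$ and $\supnorm{(\D\flo{t})^{-1}}$ are at most $Ce^{\alpha t}$ for $\alpha$ comparable to $\supnorm{\D X}$, with $C$ absorbing comparability constants between the Riemannian metric and the local coordinate patches of the branched differential structure. Combining everything gives $\norm{\flo{t}_*\mu}_{\dmeasures} \leq Ce^{\alpha t}\norm{\mu}_{\dmeasures}$ after possibly enlarging the constants.

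The main technical obstacle will be handling the branched structure and the boundary: ensuring that $A$ is genuinely continuous across branch lines (where different sheets merge), that the chain rule identity above holds also at such points in the sense of the branched-manifold differential structure of Section~\ref{sec:branchedmanifolds}, and that $\eta\circ\flo{t}$ is an admissible $\cC^1_0$ test function. The last point uses the fact (noted in the introduction) that the flow is tangent to the boundary $\partial\Oo$, so the support of $\eta\circ\flo{t}$ remains in the interior; a routine density argument takes care of any residual technicality.
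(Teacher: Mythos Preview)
There is a genuine gap, and it lies exactly where you flag the ``main technical obstacle'' and then dismiss it. Your argument hinges on $\eta\circ\flo{t}$ being an admissible $\cC^{1}_{0}$ test function so that $-\mu(A(\eta\circ\flo{t}))$ can be identified with $\D_{A}\mu(\eta\circ\flo{t})$. But this fails: although $X$ is tangent to $\partial\Oo$, the boundary is \emph{not} forward-invariant on a branched manifold. An orbit starting on the edge of one sheet can cross a branch line and land in the interior of the merged sheet, so $\flo{t}(\partial\Oo)\not\subset\partial\Oo$ in general and $\eta\circ\flo{t}$ need not vanish on $\partial\Oo$. This is precisely the content of Remark~\ref{rem:markov}. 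Your proposed ``routine density argument'' also breaks down: to approximate $\eta\circ\flo{t}$ by $\cC^{1}_{0}$ functions one multiplies by a cutoff $\omega_{\delta}$ near $\partial\Oo$, but for a generic pulled-back field $A$ one has $\supnorm{A\omega_{\delta}}\sim\delta^{-1}$, so the approximation does not control $\mu(A(\cdot))$.

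The paper's proof avoids this by abandoning the general pullback and instead splitting along the two distinguished directions $X$ and $V$ (Lemma~\ref{lem:equivalent}). In the flow direction the tangency of $X$ to $\partial\Oo$ lets one choose cutoffs with $\supnorm{X\omega_{\delta}}$ uniformly bounded, giving Lemma~\ref{lem:zeroed} and hence Lemma~\ref{lem:Xest}. In the transverse direction one cannot do this; instead one subtracts an explicit boundary interpolant $\Psi_{\eta,t}$ (Lemma~\ref{lem:linearpart}) so that $A_{t}(\eta\circ\flo{t}-\Psi_{\eta,t})\in\cC^{1}_{0}(\Oo)$, at the cost of an extra term $Ce^{\alpha t}\TotalVar{\mu}$ in the estimate (Lemma~\ref{lem:Vest}). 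Your single-stroke pullback argument has no mechanism to produce this boundary correction, and without it the integration-by-parts identity $\mu(A\zeta)=-\D_{A}\mu(\zeta)$ simply does not hold for $\zeta=\eta\circ\flo{t}$.
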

\begin{flushleft}The proof of the above proposition is the content of  Section~\ref{sec:bounded}.
\end{flushleft} % }

%
%\begin{lem}\label{lem:bounded2}
%There exists $C<\infty$ and $\alpha<\infty$ such that $\norm{\smash{\flo{t}_{*}}}_{\dmeasures} \leq Ce^{\alpha t}$ for all $t\geq 0$.
%\end{lem}
%\begin{proof}
%Let $C:= \sup\{\norm{\smash{\flo{t}_{*}}}_{\dmeasures} :{t\in[0,1]}\}$ which is finite by Proposition~\ref{prop:bounded}. Let $\alpha:= \ln C$. By the semigroup property \eqref{eq:semigroupproperty} for all $t\geq 0$ we have $\norm{\smash{\flo{t}_{*}}}_{\dmeasures} \leq C^{t+1} = Ce^{\alpha t}$.
%\end{proof}
%  
  The above estimates are far from optimal and later we will be able to improve them but they are required for us to proceed at this stage.
A first consequence of the above lemma is that we may also consider $\R{z} : \dmeasures \to \dmeasures$ for all $\Re(z)>\alpha$.
The space $\cB(\dmeasures, \measures)$, which is the space of  linear operators  $\cL: \dmeasures \to \measures$  is, when endowed with the operator norm 
\[
\norm{ \cL  }_{\dmeasures \to \measures}:= \sup\{ \norm{\cL \mu}_{\measures}: \mu \in \dmeasures, \norm{\mu}_{\dmeasures} \leq 1  \},
\]
   a Banach space. It is interesting to note that this idea of using a weaker operator norm, considering the operator as mapping from strong space to weak space, has already been used to great effect in studying the stability of the spectrum of discrete-time dynamical systems \cite{KL}.
\begin{lem}\label{lem:lip}
There exists $C<\infty$ such that $\frac{1}{t} \norm{ \smash{\flo{t}_{*} - \Id } }_{\dmeasures \to \measures} \leq C$ for all $t> 0$.
\end{lem}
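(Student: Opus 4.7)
The plan is to handle $t \geq 1$ and $t \in (0,1]$ separately. For $t \geq 1$, Lemma~\ref{lem:TotalVarBound} and the embedding $\dmeasures \hookrightarrow \measures$ at once give
\[
\tfrac{1}{t}\norm{\flo{t}_{*}\mu - \mu}_{\measures} \leq \tfrac{1}{t}\cdot 2\TotalVar{\mu} \leq 2\norm{\mu}_{\dmeasures},
\]
so the short-time regime $t \in (0,1]$ carries all the genuine content. There the bound must be proportional to $t$, and the natural mechanism is integration along orbits, exploiting the smoothness of the flow and the differentiability of $\mu$.

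Concretely, for a test function $\eta \in \cC^{1}_{0}(\Oo)$ with $\supnorm{\eta} \leq 1$, the fundamental theorem of calculus applied to $s \mapsto \eta(\flo{s}(p))$ yields
\[
\eta(\flo{t}(p)) - \eta(p) = \int_{0}^{t} (X\eta)(\flo{s}(p)) \, ds,
\]
where $X$ is the generating $\cC^{2}$ vector field. Integrating against $\mu$ and applying Fubini (justified by joint continuity and boundedness of the integrand) produces
\[
(\flo{t}_{*}\mu - \mu)(\eta) = \int_{0}^{t} \flo{s}_{*}\mu(X\eta) \, ds.
\]
Now Proposition~\ref{prop:bounded} ensures $\flo{s}_{*}\mu \in \dmeasures$ with $\norm{\flo{s}_{*}\mu}_{\dmeasures} \leq Ce^{\alpha s}\norm{\mu}_{\dmeasures}$, so applying the defining property of differentiability to $\flo{s}_{*}\mu$ with vector field $X$ gives
\[
\abs{\flo{s}_{*}\mu(X\eta)} = \abs{\D_{X}(\flo{s}_{*}\mu)(\eta)} \leq \supnorm{X}\,\TotalVar{\D(\flo{s}_{*}\mu)}\,\supnorm{\eta} \leq \supnorm{X}\,Ce^{\alpha s}\norm{\mu}_{\dmeasures},
\]
using the scaling $\TotalVar{\D_{X}\nu} \leq \supnorm{X}\TotalVar{\D\nu}$. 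Since $e^{\alpha s}$ is uniformly bounded on $[0,1]$ and $\supnorm{X}$ is finite, integration over $s \in [0,t]$ yields the required $O(t)\norm{\mu}_{\dmeasures}$ bound.

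To recover the $\measures$-norm one passes from $\cC^{1}_{0}$ to arbitrary continuous test functions of sup-norm at most $1$ by uniform density, using partitions of unity and mollification in the charts of the $\cC^{2}$ branched structure. The main technical points to verify are this density argument and the Fubini step (measurability in $s$ follows from the Bochner measurability of $t \mapsto \flo{t}_{*}$ already used in~\eqref{eq:defRz}); neither presents a serious obstacle. Once Proposition~\ref{prop:bounded} is in hand the estimate is essentially mechanical, the key conceptual content being that in time $t$ a $\cC^{1}$ test function changes by at most $O(t)$ times its derivative along orbits, and the differentiability of $\mu$ exactly absorbs that derivative cost.
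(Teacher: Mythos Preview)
Your proof is correct and follows essentially the same route as the paper's: split into large and small $t$, and for small $t$ use the identity $\eta\circ\flo{t}-\eta=\int_{0}^{t}X\eta\circ\flo{s}\,ds$ to write $(\flo{t}_{*}\mu-\mu)(\eta)=\int_{0}^{t}\flo{s}_{*}\mu(X\eta)\,ds$, then invoke Proposition~\ref{prop:bounded}. Your presentation is somewhat more explicit about the Fubini step and the density argument needed to pass from $\cC^{1}_{0}$ test functions to the full $\measures$-norm, but the substance is the same.
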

\begin{proof} %[Proof of Lemma~\ref{lem:lip}]
Since $ \frac{1}{t} \norm{ \smash{\flo{t}_{*} - \Id } }_{\dmeasures \to \measures} \leq \frac{1}{t_{0}} \norm{ \smash{\flo{t}_{*} - \Id } }_{\measures} \leq 2/t_{0}  $ for all $t\geq t_{0}$ it suffices to prove the lemma for  $t\in  [0,t_{0}]$ where $t_{0}$ is small. We estimate $\sup\{\abs{(  \flo{t}_{s}\mu - \mu  )(\eta)}: \eta\in \cC^{1}(\Oo), \supnorm{\eta}\leq 1\}$. Fix $\mu \in \dmeasures$.
The key is to note  that 
$\int_{0}^{t} X \eta \circ \flo{s} \ ds = \eta \circ \flo{t} - \eta$ for all $t\geq 0$.
This means that for any $\eta\in \cC^{1}(\Oo)$
\[
\left(  \flo{t}_{s}\mu - \mu  \right)(\eta) = \mu( \eta\circ\flo{t} - \eta)
= \int_{0}^{t} \flo{s}_{*}\mu(X\eta) \ ds.
\]
Using the estimate from Proposition~\ref{prop:bounded} this shows that there exists $C<\infty$ such that $\TotalVar{ \flo{t}_{s}\mu - \mu } \leq t C \norm{\mu}_{\dmeasures}$ for all $\mu\in \dmeasures$.
\end{proof}
Using also the semigroup property the above lemma says that the operator-valued function $t\mapsto \flo{t}_{*} \in \cB(\dmeasures, \measures)$ is Lipschitz. I.e. is Lipschitz with respect to the $\norm{ \cdot  }_{\dmeasures \to \measures}$ norm. 
We can now make clear one of the reasons why the quantity $\R{z}$ is so important, namely that the key behaviour of $\flo{t}_{*}$ maybe be recovered from the study of $\R{z}$. Here we take the point of view that the definition of $\R{z}$ is akin to the Laplace-Stieltjes transform of an operator valued function and so, in a limited sense, there exists an inverse to this transform.
\begin{thmm}\label{thm:inverse}
Suppose $t\geq 0$, $a >0$. Then, in $\cB(\dmeasures, \measures)$, we have that
\[
  \flo{t}_{*}   = \lim_{k\to \infty} \frac{1}{2\pi i} \int_{-k}^{k} e^{(a+ib)t}\R{a+ib} \ db.
\]
\end{thmm}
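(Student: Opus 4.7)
The plan is to interpret the claim as an operator-valued Laplace--Bromwich inversion and reduce it, via Fubini, to a Dirichlet-kernel limit in the Banach space $(\measures, \TotalVar{\cdot})$, with convergence made uniform over the unit ball of $\dmeasures$ by the quantitative Lipschitz bound of Lemma~\ref{lem:lip}. Concretely, fix $\mu\in\dmeasures$ and $k>0$, and substitute the defining integral \eqref{eq:defRz} for $\R{a+ib}\mu$. For fixed $k$ the double integral on $[-k,k]\times[0,\infty)$ is absolutely convergent in $\measures$, because by Lemma~\ref{lem:TotalVarBound} the integrand is bounded in total variation by $e^{-as}\TotalVar{\mu}$, whose total integral is at most $2k\,a^{-1}e^{at}\TotalVar{\mu}$. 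Fubini then gives
\[
\frac{1}{2\pi i}\int_{-k}^{k} e^{(a+ib)t}\R{a+ib}\mu\,db \;=\; \int_{0}^{\infty} e^{a(t-s)}\flo{s}_{*}\mu \cdot D_k(t-s)\,ds,
\]
where $D_k(u)=\frac{1}{2\pi i}\int_{-k}^{k}e^{ibu}\,db$ is (up to the chosen normalization) the Dirichlet kernel $\sin(ku)/(\pi u)$.

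Next, combining Lemma~\ref{lem:lip} with Proposition~\ref{prop:bounded} and the semigroup property, the map $s\mapsto e^{a(t-s)}\flo{s}_{*}\mu$ is Lipschitz from $[0,\infty)$ into $(\measures,\TotalVar{\cdot})$ with local Lipschitz constant bounded by a multiple of $e^{\alpha s}\norm{\mu}_{\dmeasures}$, since
\[
\norm{\flo{s+h}_{*}\mu-\flo{s}_{*}\mu}_{\measures} \;=\; \norm{(\flo{h}_{*}-\Id)\flo{s}_{*}\mu}_{\measures} \;\leq\; C h\,\norm{\flo{s}_{*}\mu}_{\dmeasures}.
\]
I then split the right-hand side of the Fubini identity as
\[
\flo{t}_{*}\mu\cdot\!\int_{0}^{\infty}\!\! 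D_k(t-s)\,ds \;+\; \int_{0}^{\infty}\!\!\bigl[e^{a(t-s)}\flo{s}_{*}\mu-\flo{t}_{*}\mu\bigr]D_k(t-s)\,ds.
\]
For $t>0$, the scalar integral $\int_{0}^{\infty}D_k(t-s)\,ds$ converges to $1$ as $k\to\infty$ (classical sine-integral identity), so the first term tends to $\flo{t}_{*}\mu$. The second term I handle by decomposing $[0,\infty)$ into a small window $|s-t|\leq\delta$, an intermediate range $\delta<|s-t|\leq M$, and a tail $|s-t|>M$. On the window, the Lipschitz estimate above forces the integrand to have $\measures$-norm at most $C\delta\norm{\mu}_{\dmeasures}$, and the primitive $\int_{0}^{\delta}\sin(ku)/u\,du$ is bounded uniformly in $k$, yielding an $O(\delta)$ contribution. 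On the intermediate range, dividing by $t-s$ gives a Lipschitz function of $s$ times $\sin(k(t-s))$, so the Riemann--Lebesgue lemma (applied after integration by parts) makes the contribution vanish as $k\to\infty$. On the tail, the factor $e^{a(t-s)}$ supplies the needed decay (since $a>0$) combined with Proposition~\ref{prop:bounded} whenever $a>\alpha$; otherwise the tail is absorbed by choosing $M$ large and using that $\norm{\mu}_{\dmeasures}\leq 1$.

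The main obstacle is the last step: upgrading what is essentially a pointwise (or even weak-*) Dirichlet convergence to convergence in the operator norm $\norm{\cdot}_{\dmeasures\to\measures}$. This is precisely where Lemma~\ref{lem:lip} does the heavy lifting, because it provides a Lipschitz constant that is uniform over $\norm{\mu}_{\dmeasures}\leq 1$; without such a quantitative bound, the classical Dirichlet inversion would only yield a weak limit. A secondary subtlety is that $D_k\notin L^1$, so one cannot directly dominate and must use integration by parts to convert the oscillation of $\sin(k(t-s))$ into a uniform-in-$k$ factor of $k^{-1}$ on the intermediate and tail pieces. Once these estimates are assembled and made uniform in $\mu$, the theorem follows for every $t>0$; the endpoint $t=0$ requires a separate remark since at a jump of $s\mapsto\flo{s}_{*}$ the Dirichlet integral formally recovers only the half-value, but this is irrelevant for the subsequent use of the inversion formula at positive times.
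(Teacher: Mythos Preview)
Your approach is sound for $t>0$ and rests on the same decisive input as the paper---the Lipschitz estimate of Lemma~\ref{lem:lip}---but the route is genuinely different. The paper does not unpack any Dirichlet kernel: it sets $F(t):=\flo{t}_{*}-\Id$, observes from Lemma~\ref{lem:lip} that $F\in\operatorname{Lip}_{0}(\bR_{+},\cB(\dmeasures,\measures))$, and then invokes the complex inversion theorem for the Laplace--Stieltjes transform of a Lipschitz operator-valued function from the monograph of Arendt, Batty, Hieber and Neubrander, obtaining $F(t)$ as the stated limit with $\R{a+ib}$ replaced by $\R{a+ib}-\Id/(a+ib)$; a one-line scalar identity then removes the correction term. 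Your argument is in effect a self-contained proof, specialised to this setting, of that cited black-box result. What you gain is that nothing is outsourced; what the paper's route gains is brevity.

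Two small points in your execution. First, on the tail the clause ``Proposition~\ref{prop:bounded} whenever $a>\alpha$; otherwise \ldots'' is misplaced: for the $\measures$-norm of the tail integrand one needs only $\TotalVar{\flo{s}_{*}\mu}\le\TotalVar{\mu}$ from Lemma~\ref{lem:TotalVarBound} together with the decay of $e^{-a(s-t)}$, valid for every $a>0$ as the theorem requires (and one should take $M\ge t$ so that only the right tail $s>t+M$ occurs). Proposition~\ref{prop:bounded} is needed solely on the compact intermediate range, where it supplies the uniform-in-$\mu$ Lipschitz constant that feeds into Lemma~\ref{lem:lip}. Second, your formula for $D_{k}$ silently drops a factor of $i$, since $\tfrac{1}{2\pi i}\int_{-k}^{k}e^{ibu}\,db=\sin(ku)/(\pi i u)$; this is cosmetic and reflects a normalisation slip already present in the displayed statement. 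Your remark on $t=0$ is apt: at the endpoint the Bromwich integral returns $\tfrac{1}{2}\Id$ rather than $\Id$, so the formula as written holds for $t>0$.
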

\begin{proof}
This is  an application of the inverse of the Laplace-Stieltjes transform of an operator valued function so we provide the details to pass from our setting here to the result described in the reference monograph \cite{arendt2011vector}.
 Let $F(t):= \flo{t}_{*} - \Id$. Since $F(0)=0$ by definition and by Lemma~\ref{lem:lip} we have that $F \in \operatorname{Lip_{0}}(\bR_{+},\cB(\dmeasures, \measures)   )$ (as defined in the reference). Consequently, by \cite[Theorem~2.3.4 and (1.22)]{arendt2011vector}, we have that
 \[
 F(t) = \lim_{k\to \infty} \frac{1}{2\pi i} \int_{-k}^{k} e^{(a+ib)t} \left( R(a+ib) - \frac{\Id}{a+ib}    \right) \ db.
 \]
This is sufficient to conclude since $\int_{-\infty}^{\infty} (a+ib)^{-1} e^{(a+ib)t} \ db = 2 \pi i$.
\end{proof}
The above lemma could not be expected to hold in $\cB(\dmeasures, \dmeasures)$ since the Lipschitz property of Lemma~\ref{lem:lip} is essential. We note in passing that other possibilities  exist, including considering the inverse for some fixed measure \cite[\S I.3.12]{arendt2011vector}.
To proceed we clearly require more information concerning  $\R{z}$.
\begin{prop}\label{prop:quasicompact}
For each $\Re(z)>\alpha$ the linear operator $\R{z}:\dmeasures \to \dmeasures$ is quasi-compact with spectral radius bounded above by $\Re(z)^{-1}$  and essential spectral radius bounded above by $(\Re(z)+\lambda)^{-1}$.
\end{prop}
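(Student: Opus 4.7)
The plan is to derive the conclusion from two inequalities for $\R{z}^{n}$ on $\dmeasures$ — a weak-norm bound yielding the spectral radius and a Lasota--Yorke-type bound yielding the essential spectral radius — and then apply the standard Hennion argument via the compact embedding $\dmeasures\hookrightarrow\measures$. The weak-norm bound is almost immediate: combining Lemma~\ref{lem:Rzn} with Lemma~\ref{lem:TotalVarBound} gives $\TotalVar{\R{z}^{n}}\leq\Re(z)^{-n}$, so any eigenpair $\R{z}\mu=\zeta\mu$ with $\mu\in\dmeasures\subset\measures$ satisfies $|\zeta|^{n}\TotalVar{\mu}\leq\Re(z)^{-n}\TotalVar{\mu}$, forcing either $|\zeta|\leq\Re(z)^{-1}$ or $\mu=0$. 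Combined with the essential spectral radius bound from Hennion, this controls the full spectral radius on $\dmeasures$.

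The main technical step is to prove a Lasota--Yorke inequality for $\flo{t}_{*}$ on $\dmeasures$ of the form
\begin{equation*}
\TotalVar{\D\flo{t}_{*}\mu}\leq Ce^{-\lambda t}\TotalVar{\D\mu}+K(t)\TotalVar{\mu}.
\end{equation*}
My approach is to test $\D_{U}\flo{t}_{*}\mu$ against $\eta\in\cC^{1}_{0}(\Oo)$ with $\supnorm{\eta}\leq 1$ for an arbitrary unit $U\in\mathfrak{X}(\Oo)$: the identity $-\D_{U}\flo{t}_{*}\mu(\eta)=\mu((U\eta)\circ\flo{t})$ motivates rewriting $(U\eta)\circ\flo{t}$ as $\tilde{U}(\eta\circ\flo{t})$ for a suitable vector field $\tilde{U}$ on the source, modulo a correction in the flow direction that is handled via $\partial_{t}(\eta\circ\flo{t})=X\eta\circ\flo{t}$ (the trick already used in Lemma~\ref{lem:lip}). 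Decomposing $\tilde{U}$ in the basis $\{V,X\}$ (valid by transversality hypothesis (2)), the expansion hypothesis (3), $\langle D\flo{t}u,V\rangle\geq C^{-1}e^{\lambda t}\langle u,V\rangle$, forces the $V$-component of $\tilde{U}$ to have size at most $Ce^{-\lambda t}$ — this produces the contracting term $Ce^{-\lambda t}\TotalVar{\D_{V}\mu}$. The flow-direction component and any $\cC^{1}$ correction terms arising from Jacobian factors and from the non-invariance of $\cF_{V}$ involve $\eta$ undifferentiated, so by integration by parts reversed they contribute only to $K(t)\TotalVar{\mu}$, with $K(t)$ growing at most exponentially in $t$ through $\cC^{2}$ norms of $\flo{t}$.

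Substituting this estimate into Lemma~\ref{lem:Rzn} and using $\int_{0}^{\infty}t^{n-1}e^{-(\Re(z)+\lambda)t}dt=(n-1)!\,(\Re(z)+\lambda)^{-n}$, I obtain
\begin{equation*}
\norm{\R{z}^{n}\mu}_{\dmeasures}\leq C(\Re(z)+\lambda)^{-n}\norm{\mu}_{\dmeasures}+C_{n}\TotalVar{\mu}
\end{equation*}
(the second term is finite since $K(t)$ has exponential growth strictly slower than $e^{\Re(z)t}$ for $\Re(z)>\alpha$, and absorbs the weak piece of $\R{z}^{n}\mu$ as well). The Banach space $(\dmeasures,\norm{\cdot}_{\dmeasures})$ embeds compactly into $(\measures,\TotalVar{\cdot})$ by the branched-manifold version of Helly's selection theorem for functions of bounded variation. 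Hennion's theorem then delivers the essential spectral radius bound $(\Re(z)+\lambda)^{-1}$, and together with the spectral-radius argument of the first paragraph this completes the proof. The principal obstacle is the Lasota--Yorke estimate: concretely, organising the pull-back of test vector fields in a coordinate-independent way on a branched manifold, while isolating the $V$-direction to harvest the precise exponential rate $e^{-\lambda t}$ and ensuring all error terms involving the non-invariant foliation $\cF_{V}$ are genuinely of the weak-norm type and controlled by the $\cC^{2}$ smoothness of $X$ and $V$.
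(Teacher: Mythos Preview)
Your overall strategy --- Hennion via a Lasota--Yorke inequality together with the compact embedding $\dmeasures\hookrightarrow\measures$ --- matches the paper's, and the spectral-radius argument in your first paragraph is fine. The gap is in the central step: the Lasota--Yorke inequality you claim for $\flo{t}_{*}$,
\[
\TotalVar{\D\flo{t}_{*}\mu}\leq Ce^{-\lambda t}\TotalVar{\D\mu}+K(t)\TotalVar{\mu},
\]
is false. The flow direction is neutral: $(X\eta)\circ\flo{t}=X(\eta\circ\flo{t})$, so $\TotalVar{\D_{X}\flo{t}_{*}\mu}\leq\TotalVar{\D_{X}\mu}$ with no decay, and this term cannot be converted to a weak-norm contribution at fixed $t$. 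Your assertion that ``the flow-direction component \ldots\ involve[s] $\eta$ undifferentiated'' is precisely where the argument breaks --- the identity $\partial_{t}(\eta\circ\flo{t})=X\eta\circ\flo{t}$ only helps \emph{after} the time integration; it does nothing for a single $\flo{t}_{*}$. If you instead integrate the true pointwise-in-$t$ estimates (Lemma~\ref{lem:Xest} and Lemma~\ref{lem:Vest}) against $t^{n-1}e^{-zt}$, the surviving $\TotalVar{\D_{X}\mu}$ term carries coefficient of order $\Re(z)^{-n}$, yielding an essential spectral radius bound of $\Re(z)^{-1}$ --- no better than the spectral radius and hence useless for quasi-compactness.

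The paper's remedy is to derive the Lasota--Yorke inequality directly for $\R{z}^{n}$ rather than for $\flo{t}_{*}$. Starting from Lemma~\ref{lem:Rzn} one writes $\D_{X}\R{z}^{n}\mu(\eta)$ as an integral of $\mu(\partial_{t}(\eta\circ\flo{t}))$ against $t^{n-1}e^{-zt}$ and integrates by parts \emph{in $t$}; this converts the flow-direction derivative entirely into a weak-norm term, $\TotalVar{\D_{X}\R{z}^{n}\mu}\leq(\abs{z}+\Re(z))\Re(z)^{-n}\TotalVar{\mu}$ (Lemma~\ref{lem:boundRz3}). The $V$-direction estimate (Lemma~\ref{lem:boundRz2}) does contract at rate $(\Re(z)+\lambda)^{-n}$ essentially as you describe. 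Together these give the correct Lasota--Yorke bound for $\R{z}^{n}$ (Lemma~\ref{lem:estnorm2}), after which Hennion applies exactly as you propose.
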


\begin{flushleft}The proof of the above proposition is the content of Section~\ref{sec:proofQC}.\end{flushleft}

   The  importance of the above proposition lies in the following consequence, a fact which has significant relevance in view of Theorem~\ref{thm:inverse}.

\begin{thmm}\label{thm:extension}
The operator valued function $z\mapsto \R{z} \in \cB(\dmeasures, \dmeasures)$  admits an extension which is holomorphic on the set $\{z\in \bC: \Re(z)>0\}$ and  meromorphic   on the set $\{z\in \bC: \Re(z)>-\lambda\}$.
\end{thmm}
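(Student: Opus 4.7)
My plan is to build the extension from the pseudo-resolvent identity of Lemma~\ref{lem:resolventequation}. Rearranging $(z-\zeta_0)\R{\zeta_0}\R{z}=\R{\zeta_0}-\R{z}$ (and using that $\R{z}$ and $\R{\zeta_0}$ commute, which follows by symmetrising this identity under the swap $z\leftrightarrow\zeta_0$) gives, for $\Re(z),\Re(\zeta_0)>\alpha$,
\[
\R{z}=\R{\zeta_0}\bigl(\Id-(\zeta_0-z)\R{\zeta_0}\bigr)^{-1}.
\]
Fixing a real $\zeta_0>\alpha$, I would take the right-hand side as the \emph{definition} of the extension $\tilde{\R}(z)$ on $\dmeasures$, valid wherever the inverse exists. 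For $\Re(z)>\alpha$ this reproduces the original $\R{z}$ (since the Bochner integral already converges in $\dmeasures$ there by Proposition~\ref{prop:bounded}), so $\tilde{\R}$ is a genuine extension.

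The analytic structure of $\tilde{\R}$ is read off from the spectrum of $\R{\zeta_0}$ via the functional calculus. Proposition~\ref{prop:quasicompact} tells us that $\spec(\R{\zeta_0})$ is contained in the closed disk of radius $\zeta_0^{-1}$, and that inside the annulus $(\zeta_0+\lambda)^{-1}<|\gamma|\leq\zeta_0^{-1}$ there are at most finitely many eigenvalues of finite multiplicity. The operator $(\Id-(\zeta_0-z)\R{\zeta_0})^{-1}$ exists precisely when $(\zeta_0-z)^{-1}\notin\spec(\R{\zeta_0})$. Consequently $\tilde{\R}$ is holomorphic on the open disk $\{|z-\zeta_0|<\zeta_0\}$ (where $|(\zeta_0-z)^{-1}|>\zeta_0^{-1}$ places us strictly outside the whole spectrum) and meromorphic, with only isolated poles of finite order, on the larger disk $\{|z-\zeta_0|<\zeta_0+\lambda\}$ (where only the non-essential part of the spectrum can be hit).

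To cover the target domains, I note the geometric fact that for real $\zeta_0>0$ the disk $\{|z-\zeta_0|<\zeta_0\}$ is the open disk of radius $\zeta_0$ tangent to the imaginary axis at the origin; as $\zeta_0\to\infty$ these disks exhaust $\{\Re(z)>0\}$, and the enlarged disks $\{|z-\zeta_0|<\zeta_0+\lambda\}$ exhaust $\{\Re(z)>-\lambda\}$. Extensions built from different choices of $\zeta_0$ coincide on $\{\Re(z)>\alpha\}$ (both equal $\R{z}$ defined by the Bochner integral), so by the identity theorem for meromorphic operator-valued functions they glue into a single meromorphic extension on $\{\Re(z)>-\lambda\}$, holomorphic on $\{\Re(z)>0\}$.

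The one point that genuinely needs the strong hypothesis is excluding poles from the right half-plane: this rests entirely on the spectral radius bound $\zeta_0^{-1}$ in Proposition~\ref{prop:quasicompact}, together with the observation above that $\{|z-\zeta_0|<\zeta_0\}\subset\{\Re(z)>0\}$. The remaining verifications — that $z\mapsto(\Id-(\zeta_0-z)\R{\zeta_0})^{-1}$ is holomorphic/meromorphic as an $\cB(\dmeasures,\dmeasures)$-valued map, and that the local extensions are consistent — are routine applications of the analytic functional calculus for quasi-compact operators.
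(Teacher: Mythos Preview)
Your proof is correct and follows essentially the same route as the paper's: both rearrange the pseudo-resolvent identity to write $\R{z}=\R{\zeta_0}(\Id-(\zeta_0-z)\R{\zeta_0})^{-1}$ (equivalently, the paper's $\eta\R{z}(\eta\Id-\R{z})^{-1}$ with $\eta=(z-\zeta_0)^{-1}$) and then read off holomorphy and meromorphy from the spectral radius and essential spectral radius bounds of Proposition~\ref{prop:quasicompact}. You additionally make explicit the covering of $\{\Re(z)>0\}$ and $\{\Re(z)>-\lambda\}$ by the disks $\{|z-\zeta_0|<\zeta_0\}$ and $\{|z-\zeta_0|<\zeta_0+\lambda\}$ as $\zeta_0\to\infty$, together with the gluing via the identity theorem---details the paper leaves implicit.
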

\begin{proof}
Fix $z\in \bC$ such that $\Re(z)>\alpha$. For any $\eta\in \bC$ such that $\abs{\eta} > \Re(z)^{-1}$ then  
$ \eta^{-1} \R{z+\eta^{-1}} \R{z} = \R{z+\eta^{-1}} - \R{z}$
   by Lemma~\ref{lem:resolventequation} since in particular  $\eta \neq 0$ and $\Re(z-\frac{1}{\eta})>0$. Rearranging we obtain
\[
\R{z+\tfrac{1}{\eta}} = \eta \R{z} (\eta \Id - \R{z})^{-1}
\]
using that $(\eta \Id - \R{z})$ is invertible by the spectral radius estimate of  Proposition~\ref{prop:quasicompact}.
We use this formula to define the extension of $\R{z}$ into the left half of the imaginary plane.
 By the essential spectral radius estimate of Proposition~\ref{prop:quasicompact} the operator valued function $\eta \mapsto (\eta \Id - \R{z})^{-1}$ is meromorphic on the set $\{ \abs{\eta} > (\Re(z) +\lambda)^{-1} \}$.
\end{proof}

We now relate the above ideas to the standard theory of one-parameter semigroups  (see   \cite{Davies:2007qy} for the theory of one-parameter semigroups).
The \emph{generator} of the one-parameter semigroup $\flo{t}_{*}$ is the linear operator 
defined by
\[ 
Z\mu := \lim_{t\to 0} \tfrac{1}{t}  \left(\flo{t}_{*} \mu -\mu   \right)
\]
the domain of $Z$ being the set of $\mu\in \dmeasures$ for which the limit exists.
We would expect $Z$ to be an unbounded operator and moreover,  there is no reason to expect even that the domain of $Z$ is dense in $\dmeasures$. The problem is that there is no reason for the range of the resolvent to be dense in the setting that we are studying. However is the semigroup of operators were \emph{strongly continuous}\footnote{Strongly-continuous one-parameter semigroups are sometimes called $c_{0}$-semigroups.} then by standard theory the domain of the generator is dense.  If we wished to consider the generator in this fashion we may always take the following approach. For all $s>0$, $\mu \in \measures$ let
\[
\bA_{s}\mu := s^{-1} \int_{0}^{s} \flo{t}_{*}\mu \ dt.
\]
Hence let 
$\dmeasures_{\flat}:= \{ \bA_{s}\mu : \mu\in \dmeasures, s>0\}$ and let $\dmeasures_{\ddagger}$ denote the completion of $\dmeasures_{\flat}$ with respect to $\norm{\cdot}_{\dmeasures}$.
%
%
%\[
%\dmeasures_{\ddagger} := \overline{ \left\{ \textstyle\int_{0}^{s} \flo{t}_{*}\mu \ ds : \mu \in \dmeasures, s>0 \right\} }.
%\]
Since $\dmeasures_{\ddagger}$ is a vector subspace of $\dmeasures$ and complete by construction we know that $(\dmeasures_{\ddagger}, \norm{\cdot}_{\dmeasures})$ is a Banach space. Note also that $\flo{t}_{*} \dmeasures_{\ddagger} \subseteq \dmeasures_{\ddagger}$. 

\begin{lem}\label{lem:c0}
$\norm{\smash{\flo{t}_{*}\mu - \mu}}_{\dmeasures} \to 0$ as $t\to 0$ for all $\mu \in \dmeasures_{\ddagger}$.
\end{lem}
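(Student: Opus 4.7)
The plan is the classical two-step argument: first establish the conclusion on the dense subspace $\dmeasures_{\flat}$ by explicit computation, then extend to the completion $\dmeasures_{\ddagger}$ using the uniform bound on $\flo{t}_{*}$ from Proposition~\ref{prop:bounded}.

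First I would take a typical element $\nu = \bA_{s}\mu \in \dmeasures_{\flat}$ and use the semigroup property together with a simple change of variables to rewrite
\[
\flo{t}_{*}\bA_{s}\mu - \bA_{s}\mu
= s^{-1} \int_{0}^{s} \flo{t+r}_{*}\mu \ dr - s^{-1}\int_{0}^{s} \flo{r}_{*}\mu \ dr
= s^{-1} \left( \int_{s}^{s+t}\flo{r}_{*}\mu \ dr - \int_{0}^{t} \flo{r}_{*}\mu \ dr \right).
\]
Applying Proposition~\ref{prop:bounded} inside each of the two integrals then yields
\[
\norm{\flo{t}_{*}\bA_{s}\mu - \bA_{s}\mu}_{\dmeasures} \leq 2 s^{-1} C e^{\alpha(s+t)} t \ \norm{\mu}_{\dmeasures},
\]
which tends to $0$ as $t\to 0$ for fixed $s>0$, $\mu\in\dmeasures$. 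Hence the conclusion holds on $\dmeasures_{\flat}$.

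Next I would extend this to $\dmeasures_{\ddagger}$ by the standard $\epsilon/3$-trick. Given $\mu\in\dmeasures_{\ddagger}$ and $\epsilon>0$, pick $\nu\in\dmeasures_{\flat}$ with $\norm{\mu-\nu}_{\dmeasures}<\epsilon$. By Proposition~\ref{prop:bounded} there exists $M<\infty$ (for instance $M=Ce^{\alpha}$) such that $\norm{\smash{\flo{t}_{*}}}_{\dmeasures}\leq M$ uniformly for $t\in[0,1]$. Then the triangle inequality gives
\[
\norm{\flo{t}_{*}\mu - \mu}_{\dmeasures}
\leq \norm{\flo{t}_{*}(\mu-\nu)}_{\dmeasures} + \norm{\flo{t}_{*}\nu - \nu}_{\dmeasures} + \norm{\nu - \mu}_{\dmeasures}
\leq (M+1)\epsilon + \norm{\flo{t}_{*}\nu - \nu}_{\dmeasures},
\]
and the remaining term goes to $0$ as $t\to 0$ by the previous step. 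Since $\epsilon$ was arbitrary, this proves the claim.

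There is no real obstacle here: the statement is essentially the reason the subspace $\dmeasures_{\ddagger}$ was constructed. The only thing that requires a moment's care is the explicit cancellation in the computation for $\bA_{s}\mu$, which gives the decisive factor of $t$ in the bound; everything else is an application of the uniform bound from Proposition~\ref{prop:bounded}.
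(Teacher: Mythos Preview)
Your proof is correct and follows essentially the same approach as the paper. The paper writes the telescoping identity as $s^{-1}\bigl(\int_{0}^{t}\flo{u+s}_{*}\mu\,du-\int_{0}^{t}\flo{u}_{*}\mu\,du\bigr)$, which is just a change of variables away from your expression, and then invokes Proposition~\ref{prop:bounded}; you are simply more explicit about the density step, which the paper compresses into the phrase ``by density it suffices''.
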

\begin{proof}
By density it suffices to prove the lemma for  $\nu =\bA_{s}\mu   $ where $\mu \in \dmeasures$, $s>0$. We have that 
\[
%\begin{split}
\flo{t}_{*}\nu - \nu 
%&=  \int_{0}^{s} \flo{t+u}_{*}\mu \ du   -  \int_{0}^{s}  \flo{u}_{*}\mu \ du\\
%&
= s^{-1}\left( \int_{0}^{t}  \flo{u+s}_{*} \mu \ du - \int_{0}^{t} \flo{u}_{*} \mu \ du \right).
%\end{split}
\]
We conclude since $\flo{t}_{*}:\dmeasures \to \dmeasures$ is bounded as demonstrated in Proposition~\ref{prop:bounded}.
\end{proof}

The above lemma means that $\flo{t}_{*}: \dmeasures_{\ddagger} \to\dmeasures_{\ddagger} $ is a \emph{strongly-continuous} one-parameter semigroup. Therefore, 
by standard theory  \cite{Davies:2007qy} the domain of \( Z \) is a dense linear subspace of \( \dmeasures_{\ddagger}  \), moreover $Z$ is a closed operator and $\R{z} = (z\Id -Z)^{-1} = R(z,Z)$ for all $\Re(z) >0$. Note that what we are saying is that the resolvent operator of the generator coincides with the operator $\R{z}$ previously defined  by the integral \eqref{eq:defRz}.  Note that it is known 
 \cite[Problem 8.1.6]{Davies:2007qy}
that the range of a pseudo-resolvent, for example $\R{z}$, is independent of $z$ for all $z$ in the domain of definition. We use the notation $\operatorname{Ran}(\cdot)$ to denote the range of some linear operator. The space $ \dmeasures_{\ddagger}  $ is sufficiently large in the following sense.  In the statement of the following lemma we  mean the range of the operator $\R{z}:\dmeasures \to \dmeasures$ and not on some other domain.
\begin{lem}\label{lem:bigenough}
Suppose $\Re(z)>0$. Then $\operatorname{Ran}(\R{z}) \subseteq \dmeasures_{\ddagger}$.
\end{lem}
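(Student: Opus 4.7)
The strategy is to show that for any $\mu \in \dmeasures$, the averaged measures $\bA_{s}\R{z}\mu$ converge to $\R{z}\mu$ in the $\norm{\cdot}_{\dmeasures}$-norm as $s \to 0^{+}$. Since Theorem~\ref{thm:extension} ensures $\R{z}\mu \in \dmeasures$ for $\Re(z)>0$, and since each $\bA_{s}\R{z}\mu$ belongs to $\dmeasures_{\flat}$ by the definition of the latter set, this convergence places $\R{z}\mu$ inside the $\norm{\cdot}_{\dmeasures}$-closure of $\dmeasures_{\flat}$, which is exactly $\dmeasures_{\ddagger}$.

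The heart of the argument is thus to establish the one-sided strong continuity $\norm{\flo{t}_{*}\R{z}\mu - \R{z}\mu}_{\dmeasures} \to 0$ as $t \to 0^{+}$. Strong continuity of the semigroup on $\dmeasures$ is \emph{not} available (indeed, avoiding that hypothesis is the whole point of introducing $\dmeasures_{\ddagger}$), so the argument must instead exploit the special Laplace-transform structure of $\R{z}\mu$. Using the semigroup property \eqref{eq:semigroupproperty} inside the defining Bochner integral \eqref{eq:defRz} and then changing variables gives, in $\measures$,
\[
\flo{t}_{*}\R{z}\mu \;=\; \int_{0}^{\infty} e^{-zs} \flo{t+s}_{*}\mu \, ds \;=\; e^{zt}\,\R{z}\mu \;-\; e^{zt}\!\int_{0}^{t} e^{-zu}\,\flo{u}_{*}\mu \, du.
\]
Each term on the right is actually in $\dmeasures$ (the first because $\R{z}\mu$ is, the second because the integrand is $\dmeasures$-Bochner integrable on the finite interval $[0,t]$ by Proposition~\ref{prop:bounded}), so the identity in fact holds in $\dmeasures$. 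Subtracting $\R{z}\mu$ and taking $\norm{\cdot}_{\dmeasures}$ then yields
\[
\norm{\flo{t}_{*}\R{z}\mu - \R{z}\mu}_{\dmeasures} \;\leq\; |e^{zt}-1|\,\norm{\R{z}\mu}_{\dmeasures} \;+\; e^{\Re(z)t}\,C\,\norm{\mu}_{\dmeasures}\!\int_{0}^{t} e^{(\alpha-\Re(z))u}\,du,
\]
both of which tend to zero with $t$. Averaging, $\norm{\bA_{s}\R{z}\mu - \R{z}\mu}_{\dmeasures} \leq s^{-1}\!\int_{0}^{s}\norm{\flo{t}_{*}\R{z}\mu - \R{z}\mu}_{\dmeasures}\,dt$, which then vanishes as $s \to 0^{+}$ as required.

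The main technical obstacle is a consistency issue: for $0 < \Re(z) \leq \alpha$ the operator $\R{z}:\dmeasures\to\dmeasures$ is not given directly by the Bochner integral \eqref{eq:defRz} but only through the meromorphic extension of Theorem~\ref{thm:extension}, so it must be checked that the integral manipulation above is legitimate in this regime. This is handled by observing that the $\measures$-valued integral $\int_{0}^{\infty}e^{-zs}\flo{s}_{*}\mu\,ds$ is holomorphic in $z$ on $\{\Re(z)>0\}$, that the extension from Theorem~\ref{thm:extension} is holomorphic there as well, and that the two agree on the overlap $\{\Re(z)>\alpha\}$; the identity principle for holomorphic $\measures$-valued functions then forces them to agree throughout $\{\Re(z)>0\}$, validating the computation above.
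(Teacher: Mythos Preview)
Your proof is correct and follows essentially the same route as the paper: both shift the time variable inside the Laplace integral to express $\flo{t}_{*}\R{z}\mu$ (equivalently $\bA_{s}\R{z}\mu$) as a scalar multiple of $\R{z}\mu$ plus a finite-time integral remainder, and then invoke Proposition~\ref{prop:bounded} to bound that remainder in $\norm{\cdot}_{\dmeasures}$. Your explicit treatment, via the identity principle, of the consistency between the integral formula and the holomorphic extension of Theorem~\ref{thm:extension} on the strip $0<\Re(z)\leq\alpha$ addresses a point the paper's proof passes over in silence.
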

\begin{proof}
Fix $\Re(z)>0$ and let $\nu\in \operatorname{Ran}(\R{z})$. I.e. $\nu = \int_{0}^{\infty}e^{-zt} \flo{t}_{*}\mu \ dt$ for some $\mu \in \dmeasures$. 
This means that
\[
\bA_{s}\nu = s^{-1}\int_{0}^{s}\int_{0}^{\infty} e^{-zt} \flo{t+u}_{*}\mu \ dt \ du
= s^{-1}\int_{0}^{s} e^{zu}\int_{u}^{\infty} e^{-zw} \flo{w}_{*}\mu \ dw \ du.
\]
Consequently
%\begin{multline*}
\[
\bA_{s}\nu - \nu
= s^{-1}\int_{0}^{s} (e^{zu}-1) \ du \  \R{z}\mu 
 -  s^{-1}\int_{0}^{s} e^{zu}\int_{0}^{u} e^{-zw} \flo{w}_{*}\mu \ dw \ du.
\]
%\[
%\begin{split}
%\bA_{s}\nu - \nu
%&= s^{-1}\int_{0}^{s} (e^{zu}-1) \ du \int_{0}^{\infty} e^{-zw} \flo{w}_{*}\mu \ dw \\
%& \ \ -  s^{-1}\int_{0}^{s} e^{zu}\int_{0}^{u} e^{-zw} \flo{w}_{*}\mu \ dw \ du.
%\end{split}
%\]
%\end{multline*}
Since we know that $\flo{t}_{*}: \dmeasures \to \dmeasures$ is bounded the above calculation means that
\[
\norm{ \bA_{s}\nu - \nu}_{\dmeasures} \to 0, \quad\text{as $s\to 0$}.
\]
Since $ \bA_{s}\nu \in \dmeasures_{\flat}$ and $\dmeasures_{\ddagger}$ is defined as the completion of $ \dmeasures_{\flat}$ we have shown that $\nu \in \dmeasures_{\ddagger}$.
\end{proof}

We use the notation $\operatorname{Dom}(\cdot)$ to denote the domain of some linear operator.
In is convenient to introduce yet one more norm.
For all  $\mu \in \operatorname{Dom}(Z)$ let
\[
\norm{\mu}_{Z} := \norm{Z\mu}_{\dmeasures} +  \norm{\mu}_{\dmeasures}. 
\]
It is known \cite[Lemma~6.1.15]{Davies:2007qy} that $ \operatorname{Dom}(Z)$ is complete with respect to the above defined norm and so one could choose to consider the Banach space $(\operatorname{Dom}(Z), \norm{\cdot}_{Z})$. 
Moreover $\flo{t}:\operatorname{Dom}(Z) \to \operatorname{Dom}(Z)$ is a one-parameter semigroup.
An inspection of the proof of Lemma~\ref{lem:c0} shows that the operator-valued function\footnote{Here we follow the previous convention of notation and so $ \cB( \operatorname{Dom}(Z), \dmeasures)$ denotes the space of bounded linear operators $\cL: \operatorname{Dom}(Z) \to  \dmeasures $.} $t\mapsto \flo{t}_{*} \in \cB( \operatorname{Dom}(Z), \dmeasures)$ is Lipschitz since $\dmeasures_{\flat}\subset \operatorname{Dom}(Z)$ and is actually a core\footnote{See     \cite[Theorem~6.1.18]{Davies:2007qy}} for $Z$. Consequently the analogue of Theorem~\ref{thm:inverse} may be obtained. 
However such a result is of limited use since, although with respect to a stronger norm, the result only holds for operators defined on $\operatorname{Dom}(Z)$. 
%For some $\Re(z)>0$, considering the operator $\R{z}: \dmeasures \to \dmeasures$, we let
%\[
%\dmeasures_{0}:= \overline{  \operatorname{Ran} (\R{z}) }.
%\]
 It is clear that we would pay a price if we restrict our attention to  $  \dmeasures_{\ddagger} $ as apposed to $ \dmeasures$.   The problem is that the space $ \dmeasures_{\ddagger}$ is dependent on the dynamics. I.e. if we considered another   semiflow acting on the same branched manifold the space $ \dmeasures_{\ddagger}$ would be different. This is of course a problem if one is interested in studying perturbations of the semiflow. If we are using $\dmeasures$ we may study all flows defined  on $\Oo$ with the same Banach space.
One of the main themes of this exposition is to demonstrate that all we wish to know about the flow can be recovered without resorting to studying the generator $Z$.
It is tempting to think that this difficulty is merely the fault of a poor choice of Banach space to start with. However this is not the case for the semiflows we are considering. The range of the resolvent will always consist of measures which have densities that are smooth along flow lines whilst the branches of the manifold means that we would expect jumps in the densities if one considers sections transversal to the flow lines.

\begin{thmm}\label{thm:details}
\mbox{ }
\begin{enumerate}
\item 
 Suppose $\Re(z)>0$. Then ${z^{-1}}$ is in the spectrum of $\R{z}:\dmeasures \to \dmeasures$. If $\abs{z} <\Re(z) +\lambda$ then
 ${z^{-1}}$ is an eigenvalue for $\R{z}:\dmeasures \to \dmeasures$.
 \item
Suppose $\Re(z)>0$, $\Re(w)>0$ and $\mu$ is an eigenvector for $\R{z}$ corresponding to the eigenvalue $z^{-1}$.  Then $\mu$ is  an eigenvector for $\R{w}$ corresponding to the eigenvalue $w^{-1}$.
 \item
For all $t>0$ then $1$ is an eigenvalue for $\flo{t}_{*}:  \dmeasures \to \dmeasures$ (perhaps not isolated). Moreover each eigenvector corresponding to this eigenvalue is also an eigenvector  for $\R{z}$ corresponding to the eigenvalue $z^{-1}$ where $\Re(z)>0$.
\item
  The function  $z\mapsto \R{z}$ taking values in $\cB(\dmeasures, \dmeasures)  $ has a pole at $z=0$.
    \end{enumerate}
\end{thmm}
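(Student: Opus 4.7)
The plan is to prove the four assertions in order, letting the earlier parts feed into the later ones.

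For (1), I would exploit the constant functional $\ell(\mu) := \mu(1)$, which lies in $\dmeasures^{*}$ since $\abs{\ell(\mu)} \leq \TotalVar{\mu} \leq \norm{\mu}_{\dmeasures}$. A direct computation from \eqref{eq:defRz} yields $\ell(\R{z}\mu) = \int_{0}^{\infty} e^{-zt}\mu(1)\, dt = z^{-1}\ell(\mu)$, so $z^{-1}$ is an eigenvalue of the adjoint $(\R{z})^{*}$ and therefore an element of $\spec(\R{z})$. Under the additional hypothesis $\abs{z} < \Re(z)+\lambda$ one has $\abs{z^{-1}} > (\Re(z)+\lambda)^{-1}$, so Proposition~\ref{prop:quasicompact} places $z^{-1}$ strictly outside the essential-spectrum disc; quasi-compactness then upgrades this to an eigenvalue of finite multiplicity.

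For (2), I would apply the resolvent identity of Lemma~\ref{lem:resolventequation} directly: if $\R{z}\mu = z^{-1}\mu$ and $w \neq z$, the identity $(z-w)\R{w}\R{z}\mu = \R{w}\mu - \R{z}\mu$ becomes $(z-w)z^{-1}\R{w}\mu = \R{w}\mu - z^{-1}\mu$, which rearranges to $\R{w}\mu = w^{-1}\mu$ (the case $z=w$ is vacuous).

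For (3), fix any real $z \in (0,\lambda)$, which automatically satisfies $\abs{z} < \Re(z)+\lambda$; by (1) there is a nonzero $\mu \in \dmeasures$ with $\R{z}\mu = z^{-1}\mu$, and by (2) $\R{w}\mu = w^{-1}\mu$ for every $\Re(w) > 0$. The $\measures$-valued function $h(t) := \flo{t}_{*}\mu - \mu$ then satisfies $\int_{0}^{\infty} e^{-wt} h(t)\, dt = \R{w}\mu - w^{-1}\mu = 0$ in $\measures$ for every $\Re(w) > 0$. Uniqueness of the Banach-space-valued Laplace transform (obtained by pairing with test functions in $\cC(\Oo)$ and reducing to the scalar case) forces $h(t) = 0$ in $\measures$ for almost every $t$, and Lemma~\ref{lem:lip} combined with the semigroup property shows $t \mapsto \flo{t}_{*}\mu$ is Lipschitz-continuous into $\measures$, so $h \equiv 0$ on $[0,\infty)$. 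Hence $\flo{t}_{*}\mu = \mu$ for all $t \geq 0$, proving that $1$ is an eigenvalue of each $\flo{t}_{*}$ on $\dmeasures$. Conversely, any common fixed point $\mu$ of the semigroup substituted into \eqref{eq:defRz} gives $\R{z}\mu = \mu\int_{0}^{\infty} e^{-zt}\, dt = z^{-1}\mu$, which supplies the moreover clause.

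For (4), the eigenvector $\mu \neq 0$ produced in (3) satisfies $\R{z}\mu = z^{-1}\mu$ for every $\Re(z) > 0$, so $\norm{\R{z}}_{\dmeasures \to \dmeasures} \geq \abs{z}^{-1}$, which blows up as $z \to 0^{+}$ along the positive reals. Since Theorem~\ref{thm:extension} provides a meromorphic extension of $z \mapsto \R{z}$ to $\{\Re(z) > -\lambda\}$, the unboundedness near the origin rules out a removable singularity and forces $z = 0$ to be a genuine pole. The main technical obstacle is the Laplace-inversion step in (3), where one has to justify Laplace uniqueness in the vector-valued setting and then promote almost-everywhere vanishing to pointwise vanishing using the Lipschitz regularity of Lemma~\ref{lem:lip}; everything else reduces to algebraic manipulation with the resolvent equation and quasi-compactness.
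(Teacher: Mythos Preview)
Your arguments for parts~(1), (2), and~(4) are essentially the paper's: the constant functional $\ell(\mu)=\mu(1)$ as an adjoint eigenvector, the resolvent identity of Lemma~\ref{lem:resolventequation}, and the blow-up of $\norm{\R{z}}$ along eigenvectors combined with the meromorphic extension of Theorem~\ref{thm:extension}. If anything, your treatment of~(4) is slightly more explicit than the paper's ``now obvious''.

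Part~(3) is where you genuinely diverge. The paper invokes the exponential (Post--Widder/Hille--Yosida) formula
\[
\flo{t}_{*}\nu=\lim_{n\to\infty}\Bigl(\tfrac{n}{t}\,\R{n/t}\Bigr)^{n}\nu,
\]
which it derives from the power formula of Lemma~\ref{lem:Rzn}; once~(2) gives $\tfrac{n}{t}\R{n/t}\mu=\mu$ for every $n$, the limit collapses to $\mu$ immediately. Your route through uniqueness of the vector-valued Laplace transform is a valid alternative and arguably more self-contained, since it avoids importing the exponential formula and instead uses only the definition~\eqref{eq:defRz} together with the Lipschitz regularity of Lemma~\ref{lem:lip} to pass from almost-everywhere to everywhere vanishing. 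The trade-off is that you must justify Laplace uniqueness in $\measures$ (pairing with $\eta\in\cC(\Oo)$ reduces this to the scalar case, as you indicate), whereas the paper's approach hides that analytic input inside the cited exponential formula.

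One small remark on the ``moreover'' clause of~(3): you prove it for common fixed points of the whole semigroup, not for an eigenvector of a single $\flo{t_0}_{*}$. The paper's proof has the same scope---it only shows that the eigenvectors of $\R{z}$ constructed in~(1) are fixed by every $\flo{t}_{*}$---so you are not missing anything the paper actually establishes.
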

\begin{proof}   %[Proof of Theorem~\ref{thm:details}]
Fix $z\in\bC$ such that $\Re(z)>0$. We define the linear functional $\ell \in \dmeasures^{*}$ by setting $\ell(\mu):= \mu(1)$ for all $\mu\in\dmeasures$. We calculate, for all $\mu\in \dmeasures$, that
\[
\begin{split}
\left(  {\R{z}}^{*} \ell \right)\mu 
&= \ell(\R{z}\mu)   =\left(  \R{z} \mu \right)(1)\\
&= \int_{0}^{\infty} e^{-zt} \left( \flo{t}_{*}\mu\right)(1) \ dt\\
 &= \int_{0}^{\infty} e^{-zt}\mu(1) \ dt
 = z^{-1}\mu(1) 
 = z^{-1} \ell(\mu).
\end{split}
\]
This means that $\ell$ is an eigenvector for ${\R{z}}^{*}$ corresponding to the eigenvalue $z^{-1}$. Consequently $z^{-1} \in \operatorname{Spec}(\R{z})$ and by the quasi-compactness result of Proposition~\ref{prop:quasicompact} we know that $z^{-1}$ is actually  an eigenvalue and so we prove item~(1) of the theorem. 
Suppose now that $\mu\in \dmeasures$ is an eigenvector for $\R{z}$ corresponding to the eigenvalue $z^{-1}$. Since $\R{z}$ satisfies the resolvent equation (Lemma~\ref{lem:resolventequation})
\[
\begin{split}
z \R{w}\mu - \mu 
&= z[ \R{w} - \R{z} ] \mu\\
&= z(z-w)\R{w}\R{z} \mu 
= (z-w) \R{w} \mu,
\end{split}
\]
and so $\R{w}\mu = w^{-1}\mu$. This proves item~(2).
For all $\nu\in \dmeasures$ and $t>0$ we have (as in \cite[Problem 8.2.4]{Davies:2007qy} this may be shown as a consequence of the formula of Lemma~\ref{lem:Rzn})
\[
\flo{t}_{*}\nu = \lim_{n\to \infty}  \left( \frac{n}{t} \R{n/t}   \right)^{n}\nu.
\]
Applying the  result of item~(2) to the above formula we obtain immediately that for all $n\in \bN$ we have $ \frac{n}{t} \R{n/t} \mu = \mu$ and consequently that $\flo{t}_{*}\mu  = \mu$ as required to prove item~(3). Item~(4) is now obvious from the definition of $\R{z}$.
\end{proof}

We now give some indication of how to relate the above one-parameter semigroup theory of Theorem~\ref{thm:inverse}, Theorem~\ref{thm:extension} and Theorem~\ref{thm:details} to the statistical properties of the flow as has been developed in \cite{BL}.
Theorem~\ref{thm:details} allows us to conclude immediately that there is at least one invariant measures in $\dmeasures$ and no more than a finite number. This in turn leads to the ergodic decomposition of the dynamical system.
The pole of $\R{z}$ at $z=0$ is simple if and only if the absolutely continuous invariant measure is unique. The flow $\flo{t} : \Oo \to \Oo$ is mixing if and only if $0$ is the only singularity of $\R{z}$ on the imaginary axis. 
It is convenient to let $\cG$ denote the set of $z\in \bC$ such that $\Re(z)>-\lambda$ and that $z$ is a pole of $z\mapsto \R{z} \in \cB(\dmeasures,\dmeasures)$. 
 $\cG \cap i \bR$ is a group and the associated eigenfunctions are all measures absolutely continuous with respect to a convex combination of the absolutely continuous invariant measures.
By Theorem~\ref{thm:extension} we know that $z\mapsto \R{z}$ is holomorphic on $\{z\in \bC: \Re(z)>-\lambda\} \setminus \cG$.
For each $z\in \cG$ let
\[
\Pi_{z}:= \frac{1}{2\pi i} \int_{\gamma} \R{z} \ dz
\]
where $\gamma$ is a positively-orientated small circle enclosing $z$ but excluding all other singularities of $\R{z}$. As with spectral projectors the resolvent equation, proven in Lemma~\ref{lem:resolventequation}, implies that the definition is  independent on the choice of $\gamma$ subject to the above conditions.
In this way the operator-theoretic  results  of Theorem~\ref{thm:inverse}, Theorem~\ref{thm:extension} and Theorem~\ref{thm:details} can be used to understand the fine statistical properties of the flow, even in settings where the discontinuities of the system present considerable obstacles to the study of perturbations.

%
%
%A  measure $\mu\in \measures$ is said to be a \emph{physical measure} for $\flo{t}:\Oo\to \Oo$ if the exists a set $B(\mu)\subset \Oo$ of positive $\m$-measure such that for  every $p\in B(\mu)$
%\[
%\lim_{T\to \infty} \frac{1}{T} \int_{0}^{T} \eta(\flo{t} p ) \ dt = \mu(\eta)
%\quad \quad \text{for all $\eta \in \cC(\Oo)$}.
%\]
%
%If $\mu$ is a physical measure for $\flo{t}:\Oo \to \Oo$ then  $\mu\in \dmeasures$. Conversely the subspace of $\dmeasures$ associated with the eigenvalue $0$ of $Z$  contains only physical measures.
%In order to do this we, 
%for all $b\in \bR$, let
%\[
%\cP_{b} := \lim_{T\to \infty} \frac{1}{T} \int_{0}^{T} e^{-ibt} \flo{t}_{*} \ dt.
%\]
%We show that $\cP_{b} = \Pi_{ib}$ if $ib\in \cG$ and equal to $0$ otherwise.
%

This present section was completely self-contained apart from the proofs of Proposition~\ref{prop:bounded} and Proposition~\ref{prop:quasicompact} which are the contents of  Section~\ref{sec:bounded} and Section~\ref{sec:proofQC}  respectively.
Section~\ref{sec:branchedmanifolds} contains the definition and notation related to branched manifolds.

\section{Branched Manifolds}\label{sec:branchedmanifolds}
The definition of a branched manifold we use here is that given  by Williams~\cite{williamsexpanding} where they are shown to arise from quotients of dynamical foliations for expanding attractors. We now recall  the definition. 
\begin{defin}[Branched Manifold]\label{def:branched}
A  {$d$-dimensional branched manifold} of class $\cC^r$ is a metrizable space $\Oo$ together with:
%\newcounter{saveenum}
\begin{enumerate}
 \item A  countable collection $\{\U_i\}_{i \in \Ichart }$ of closed subsets of $\Oo$ and for each $i$ a map $\var_i:\U_i \to \D_{i}$ where $\D_{i}$ is a closed $d$-ball in $\bR^{d}$.
 \item  A  countable  collection $\{\V{i}{j}\}_{j\in\Isubchart_{i}}$ of closed subsets of $\U_i$, for each $i$.
   \setcounter{saveenum}{\value{enumi}}
\end{enumerate}
Subject to the following axioms:
\begin{enumerate}
  \setcounter{enumi}{\value{saveenum}}
 \item $\displaystyle\bigcup_{j\in \Isubchart_{i}} \V{i}{j} = \U_i$ for each $i$.
 \item $\displaystyle\bigcup_{i\in\Ichart} \Int{\U_{i}} = \Oo$.
\item   For each $i\in \Ichart,j\in\Isubchart_{i}$ the map ${\var_{i}|}_{\V{i}{j}}$  (i.e.  $\var_i$ restricted to $\V{i}{j}$)  is a homeomorphism onto its image $\var_{i}(\V{i}{j})\subseteq \D_{i}$ and this image is a closed subset of $\D_{i}$.
\item  For each $i,i'\in \Ichart$ there exists a $\cC^r$-diffeomorphism $\al_{ii'}$ with domain $\var_{i'}(\U_i \cap \U_{i'})$ such that $\var_i = \al_{ii'} \circ \var_{i'}$ when defined.
\end{enumerate}
\end{defin}
\noindent The sets $\U_i$ are called the charts, the sets $\V{i}{j}$ are called the subcharts. If it is possible to cover each $\U_i$ with just one subchart, i.e. $\{\V{i}{j}\}_j = \{\U_i\}$, the above definition reduces to that of a manifold with boundary but without branches.  The $\var_{i}$ are called coordinate maps and the $\al_{ii'}$ are called transition maps.
%\begin{figure}[tbp]
%\begin{center}
%  \def\svgwidth{0.65\textwidth}
%    \input{1dmanifold.pdf_tex}
%\caption{In this example $\Oo$ is a one dimensional branched manifold with boundary. The manifold is described with just a single chart $\U_{1} = \Oo$ which has two subcharts $\V{1}{1}$ and $\V{1}{2}$. The coordinate map $\var_{1}$ is taken to be the horizontal projection and is not invertible in the  lower part of $\Oo$, but is invertible restricted to each of the subcharts.}
%\label{fig:1dmanifold}
%\end{center}
%\end{figure}
The branched manifold $\Oo$ contains both interior points and boundary points which are defined as follows. 
\begin{defin}[Interior Points]
A point $p\in \Oo$ is said to be an \emph{interior point} of the branched manifold if there exists $i\in \Ichart$, a set $ G\subset \U_{i}$ and $\delta>0$ such that $p\in G$ and  $\var_{i}(G) =\{ \var_{i}(p)+ y: y \in \bR^{d}, \abs{y}< \delta\}$.
\end{defin}
%Note that the set $G$ in the above definition may not be a neighbourhood of the point $p$. This is a difference to the standard situation for topological manifolds due to the presence of branches.
\begin{defin}[Boundary Points]
The complement of the interior points are called the  \emph{boundary points}. We let $\partial\Oo$ denote the set of all boundary points.
\end{defin}
%  Note that the notation $\operatorname{Int}(K) $ where $K\subseteq \Oo$ will be reserved to denote the topological interior of a set.
% 
% \begin{defin}[Regular Points]\todo{I invented the name ``regular points''. Any better suggestions?}
% A point $p\in \Oo$ is said to be a \emph{regular point} if there exists $i\in \Ichart$ and a neighbourhood $G\subset \U_{i}$ of $p$ such that $\var_{i}|G : G \to \var_{i}(G)$ is a homeomorphism.
% \end{defin}
% \begin{defin}[Branch Points]
% A point  $p\in \Oo$ is said to be a \emph{branch point} if it is not a regular point.
% \end{defin}
%
%It is convenient  to assume that the subcharts to overlap wherever they can as is made precise in the following. This certainly is natural in all examples which I can imagine.  
%
%\begin{rem}\label{rem:goodVoverlap}\todo{I think this can be assumed without loss of generality. It seems that it should be possible to prove that where this doesn't hold then we may slightly extend some $\V{i}{j}$ so that it then does hold...}
%We assume that for all $p\in \Oo\setminus \partial\Oo$ there exists $i\in\Ichart$, $j\in\Isubchart$, $G\subset \V{i}{j}$ with $p\in G$ and $\delta>0$ such that $\var_{i}(G) = \{ \var_{i}(p) + y: y\in \bR^{d}, \abs{y} <\delta\}$.
%\end{rem}
%The standard ideas of manifolds with a differential structure can be considered on branched manifolds with only slight modification.
\begin{defin}[Differentiable]
 The space $\cC^k(\Oo)$, for each $k\in\{1,2,\ldots, r\}$, is defined as the set of all maps $f:\Oo \to \bC$ such
that for every $i\in \Ichart$ and $j\in \Isubchart_i$ the map
\[
f\circ \left(\smash{{\var_{i}|}_{\V{i}{j}}}\right)^{-1} : \var_{i} (\V{i}{j})  \to \bC 
\quad \quad \text{is of class $\cC^{k}$}.
\]
\end{defin}

\begin{defin}[Tangent Bundle]
For each $i\in \Ichart$, $j\in \Isubchart_{i}$ we have the induced bundle over $\V{i}{j}$ given by 
$ \left(\smash{{\var_{i}|}_{{\V{i}{j}}}}\right)^{*} T\bR^{d}$.
Consider the disjoint union
\[
\bigsqcup_{i\in \Ichart, j\in \Isubchart_i} \left(\smash{{\var_{i}|}_{{\V{i}{j}}}}\right)^{*} T\bR^{d}
=  \left\{ (x,v,i,j):x\in \V{i}{j}, v \in T_{\var_{i}(x)}\bR^{d}, i\in \Ichart, j\in \Isubchart_i \right\}.
\]
We   introduce the  relation which sets $(x,v,i,j)\sim (x',v',i',j')$ if 
  $x = x'$ and  also
  ${(D\alpha_{i' i}) v = v'}$.
  The tangent bundle over $\Oo$, written  $T\Oo$, is defined as the above disjoint union subject to this equivalence relation.
\end{defin}

\begin{defin}[Foliation]
%We will define the  one-dimensional foliation $\cF$ of a two-dimensional branched manifold $\Oo$.
Suppose that there exists coordinate charts $\var_{i}: \U_{i} \to \bR^{d}$  such that the transition maps
$
\al_{ii'} : \var_{i'}(\U_{i} \cap \U_{i'}) \to \var_{i}(\U_{i} \cap \U_{i'})
$
which satisfy $\var_{i} = \al_{ii'}\circ \var_{i'}$ are of the form
\begin{equation}\label{eq:john}
\al_{ii'}(x,y) = \left( \al_{ii'}^{(1)}(x) , \al_{ii'}^{(2)}(x,y)   \right),
\end{equation}
where $x$ represents $n$ coordinates and $y$ represents $d-n$ coordinates.
For all $c\in \bR^{n}$ let $F_{c}:= \{p\in \U_{i}: \var_{i}(p) = (c, y), y\in \bR^{d-n}\}   $. These stripes are called the \emph{plaques} of the foliation. By \eqref{eq:john} these plaques match up from chart to chart to form the \emph{leaves} of the $n$-dimensional  foliation $\cF$.
\end{defin}

%We let $\mathfrak{X}^{k}(\Oo)$ with $k\geq 0$ denote the space of vector fields of class $\cC^{k}$. I.e. each $X\in \mathfrak{X}^{k}(\Oo)$ is a linear operator $X: \cC^{k+1}(\Oo) \to \cC^{k}(\Oo)$.

\section{$\flo{t}_{*}:\dmeasures \to \dmeasures $ is Bounded}   % and Lipschitz}
\label{sec:bounded}
In this section we show that the operators $\flo{t}_{*}:\dmeasures \to \dmeasures $ are bounded and so prove Proposition~\ref{prop:bounded}. 
Recall that $X$ is the vector field associated to the flow and $V$ is a unit vector field transversal to $X$.
%Suppose that $U$ is any vector field on $\Oo$. We recall that a  measure $\mu\in \measures$  is said to be weakly differentiable with respect to $U$ if there exists a measure denoted $D_{U}\mu \in \measures$ such that $D_{U}\mu(\eta) = - \mu( U \eta)$ for each $\eta \in \cC^{1}(\Oo)$. 
For each $\mu \in \dmeasures$ let
\begin{equation}\label{eq:defnormdag}
\norm{\mu}_{\widetilde\dmeasures}:= \TotalVar{ D_{V} \mu } + \TotalVar{ D_{X} \mu } + \TotalVar{ \mu}.
\end{equation}
This  defines a norm on $\dmeasures$, and  importantly it has the following property.
\begin{lem}\label{lem:equivalent}
The norms $\norm{\cdot}_{\dmeasures} $ and $\norm{\cdot}_{\widetilde\dmeasures}$ are equivalent on $\dmeasures$.
\end{lem}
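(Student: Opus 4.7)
The forward inequality $\norm{\mu}_{\widetilde\dmeasures}\le C\norm{\mu}_{\dmeasures}$ is immediate from the definitions: since $V$ is a unit vector field and $X$ is $\cC^{2}$ on $\Oo$ with $\supnorm{X}<\infty$, specialising the supremum defining $\TotalVar{\D\mu}$ to $U=V$ and $U=X/\supnorm{X}$ gives $\TotalVar{\D_{V}\mu}\le \TotalVar{\D\mu}$ and $\TotalVar{\D_{X}\mu}\le \supnorm{X}\TotalVar{\D\mu}$, and summing with $\TotalVar{\mu}$ yields $\norm{\mu}_{\widetilde\dmeasures}\le(1+\supnorm{X})\norm{\mu}_{\dmeasures}$.

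The real content is the reverse bound $\norm{\mu}_{\dmeasures}\le C'\norm{\mu}_{\widetilde\dmeasures}$. The strategy is to exploit uniform transversality of the foliation $\cF_{V}$ to the flow (condition (2) of the standing assumptions): at every $p\in\Oo$ the pair $\{X_{p},V_{p}\}$ is a basis of $T_{p}\Oo$ with angle bounded below by a positive constant, so every continuous vector field $U$ with $\supnorm{U}\le1$ admits a unique decomposition $U=aX+bV$ in which the coefficients $a,b\in\cC(\Oo)$ satisfy $\supnorm{a},\supnorm{b}\le C_{0}$, where $C_{0}$ depends only on $\supnorm{X}$ and the transversality lower bound. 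By the additivity of the directional derivative in its vector-field argument, $\D_{U}\mu=\D_{aX}\mu+\D_{bV}\mu$, and the problem reduces to establishing a uniform bound of the form $\TotalVar{\D_{aX}\mu}\le C_{1}\TotalVar{\D_{X}\mu}+C_{2}\TotalVar{\mu}$ for continuous $a$ with $\supnorm{a}\le C_{0}$ (and symmetrically with $X\leftrightarrow V$, $a\leftrightarrow b$).

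For this key step I would first establish the product rule $\D_{aX}\mu = a\,\D_{X}\mu + (Xa)\,\mu$ for $a\in\cC^{1}(\Oo)$, coming from the identity $aX\eta = X(a\eta)-(Xa)\eta$ and the defining relation $\D_{X}\mu(\phi)=-\mu(X\phi)$; in the smooth case this gives at once $\TotalVar{\D_{aX}\mu}\le \supnorm{a}\TotalVar{\D_{X}\mu}+\supnorm{Xa}\TotalVar{\mu}$. For a general continuous $a$ I would approximate uniformly by $\cC^{1}$-functions $a_{n}$ with $\supnorm{a_{n}}\le\supnorm{a}+o(1)$, using a chart-wise mollification compatible with the differential structure of $\Oo$ recalled in Section~\ref{sec:branchedmanifolds}, and then pass to the limit using that $\mu$ is a bounded functional on $\cC_{0}(\Oo)$ and that $a_{n}X\eta\to aX\eta$ uniformly, which gives $\TotalVar{\D_{aX}\mu}\le\liminf_{n}\TotalVar{\D_{a_{n}X}\mu}$. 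The principal obstacle is that $\supnorm{Xa_{n}}$ is \emph{not} controlled uniformly in $n$ when $a$ is merely continuous, so the bare product-rule estimate is too lossy to survive the limit; the remedy is to reorganise the product rule before taking $n\to\infty$, using $\mu(X(a_{n}\eta))=-\D_{X}\mu(a_{n}\eta)$ to trade the derivative on $a_{n}$ for a derivative on $\mu$ and obtain an estimate depending only on $\supnorm{a_{n}}$, $\TotalVar{\D_{X}\mu}$ and $\TotalVar{\mu}$. Combining this with the analogous bound for $\D_{bV}\mu$ gives $\TotalVar{\D\mu}\le C(\TotalVar{\D_{X}\mu}+\TotalVar{\D_{V}\mu}+\TotalVar{\mu})$, which is the required reverse estimate.
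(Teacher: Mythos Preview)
Your decomposition $U=aX+bV$ via uniform transversality is exactly what the paper does; the paper then simply asserts $\TotalVar{D_U\mu}\le C\norm{\mu}_{\widetilde\dmeasures}$ in one line, while you attempt to justify this step and correctly isolate the obstacle that the Leibniz correction term is not controlled when $a,b$ are merely continuous.

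The gap is that your remedy does not remove that obstacle. Writing $a_nX\eta=X(a_n\eta)-(Xa_n)\eta$ and invoking $\mu(X(a_n\eta))=-D_X\mu(a_n\eta)$ handles the \emph{first} summand with a bound depending only on $\supnorm{a_n}$ and $\TotalVar{D_X\mu}$, but the \emph{second} summand $\mu((Xa_n)\eta)$ is still there and still scales with $\supnorm{Xa_n}$; no reorganisation of the product rule makes it disappear. Equivalently, $D_{aX}\mu=D_X(a\mu)$, and you are asking whether $a\mu$ is $X$-differentiable with control independent of the regularity of $a$ --- it is not, in general. Indeed the obstruction is genuine under the paper's literal definition of $\TotalVar{D\mu}$ as a supremum over all continuous $U$ with $\supnorm{U}\le1$: take $\mu=\m$ (density $\equiv1$) in a coordinate chart and $U=\sin(Nx_1)\,\partial_{x_1}$; then $D_U\m=N\cos(Nx_1)\,\m$, so $\TotalVar{D_U\m}\sim N$ and $\TotalVar{D\m}=\infty$, contradicting the paper's own remark that $\norm{\cdot}_{\dmeasures}$ is the BV norm on densities. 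The intended reading --- signalled by the sentence ``the tangent space at each point is of course finite dimensional and so there are many equivalent possibilities for the definition of the norm'' --- is that $D\mu$ is a covector-valued measure and $\TotalVar{D\mu}$ is its total variation; under that reading the equivalence with $\norm{\cdot}_{\widetilde\dmeasures}$ follows immediately from $\{V,X\}$ being a continuous frame for $T\Oo$, which is what the paper's one-line argument is really invoking, and no approximation or product-rule analysis is needed.
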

\begin{proof}
Since $V$ and $X$ are uniformly transversal there exists $C<\infty$ such that any vector field $U$, $\supnorm{U}\leq 1$ may be written as $U= \alpha V + \beta X$ where $\supnorm{\alpha}\leq C$ and $\supnorm{\beta}\leq C$. This means that $\TotalVar{D_{U}\mu} \leq C \norm{\mu}_{\widetilde\dmeasures}$ for all $\mu\in \dmeasures$.
The other direction in immediate.
\end{proof}
Recall the quantity $\lambda>0$ given by the uniform expansion assumption. 
\begin{lem}\label{lem:split}
Suppose $t\geq 0$ and let $\Oo_{t} := \Oo \setminus (\flo{t})^{-1}\partial \Oo$.   There exists $A_{t}\in \cC^{1}(\Oo_{t})$ and $ B_{t}\in \cC^{1}(\Oo_{t})$  % (both quantities with bounded $\cC^{1}$ norm) %, and $C_{t} <\infty$ 
    such that on $\Oo_{t}$ we have %$\Oo \setminus (\flo{t})^{-1}\partial \Oo$ then 
\begin{equation}\label{eq:splitting}
\begin{split}
X\eta \circ \flo{t} &= X\left( \eta \circ \flo{t} \right),\\
V\eta \circ \flo{t} &= A_{t} V\left( \eta \circ \flo{t} \right) + B_{t} X\left( \eta \circ \flo{t} \right) 
\end{split}
\end{equation}
for all $\eta \in \cC^{1}_{0}(\Oo)$. Moreover  exists  $C <\infty$, $\alpha<\infty$ such that $\supnorm{A_{t}} \leq C e^{-\lambda t}$, $\supnorm{B_{t}}\leq C$,
$\supnorm{VA_{t}}\leq Ce^{\alpha t}$ and  
$\supnorm{XB_{t}}\leq Ce^{\alpha t}$
  for all $t\geq 0$.
\end{lem}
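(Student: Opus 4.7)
The plan is to read off both identities from the chain rule, identify $A_{t}$ and $B_{t}$ as entries of the matrix of $D\flo{t}$ in the (non-orthonormal) frame $\{V,X\}$, and then extract each of the four bounds either from the uniform expansion hypothesis, a cocycle argument, or standard variational estimates.

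The first identity is immediate from the defining property of $X$ as the infinitesimal generator of the semiflow: $D\flo{t}_{p}X(p) = \partial_{s}|_{s=0}\flo{t+s}(p) = X(\flo{t}p)$, so the chain rule yields $X(\eta\circ\flo{t}) = X\eta\circ\flo{t}$. For the second identity, on $\Oo_{t}$ the image $\flo{t}p$ lies in the interior of $\Oo$, hence $\{V(\flo{t}p), X(\flo{t}p)\}$ is a basis of $T_{\flo{t}p}\Oo$ by the uniform transversality of $V$ and $X$. I would then decompose uniquely
\[
D\flo{t}_{p} V(p) = a_{t}(p)\,V(\flo{t}p) + b_{t}(p)\,X(\flo{t}p),
\]
with $a_{t},b_{t} \in \cC^{1}(\Oo_{t})$ because $\flo{t}$, $V$, and $X$ are all $\cC^{2}$. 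The chain rule applied to $V(\eta\circ\flo{t})$ and the already established identity for $X(\eta\circ\flo{t})$ then give a $2\times 2$ linear system for the pair $(V\eta\circ\flo{t},\,X\eta\circ\flo{t})$ in terms of $(V(\eta\circ\flo{t}),\,X(\eta\circ\flo{t}))$. Inverting it yields the required formula with $A_{t} = 1/a_{t}$ and $B_{t} = -b_{t}/a_{t}$.

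The bound $\supnorm{A_{t}} \leq C e^{-\lambda t}$ is obtained by applying the uniform expansion assumption to $u = V(p)$: this gives $a_{t} \geq C^{-1} e^{\lambda t}$, after absorbing constants coming from the inner product. The bound $\supnorm{B_{t}}\leq C$ is the main obstacle, since a priori $b_{t}$ itself can grow exponentially. I would handle it using the cocycle relations derived from $D\flo{t+s} = D\flo{s} \circ D\flo{t}$, namely
\[
a_{t+s}(p) = a_{t}(p)\,a_{s}(\flo{t}p), \qquad b_{t+s}(p) = a_{t}(p)\,b_{s}(\flo{t}p) + b_{t}(p),
\]
which translate into $B_{t+s} = B_{s}\circ\flo{t} + B_{t}/(a_{s}\circ\flo{t})$. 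Choose $T_{0}$ with $C^{-1}e^{\lambda T_{0}} \geq 2$; iterating over multiples of $T_{0}$ gives $\supnorm{B_{(n+1)T_{0}}} \leq \supnorm{B_{T_{0}}} + \tfrac{1}{2}\supnorm{B_{nT_{0}}}$, hence a uniform bound. The intermediate values $t \in [nT_{0},(n+1)T_{0}]$ are then controlled via one more application of the cocycle together with boundedness of $B_{r}$ on the compact range $r\in[0,T_{0}]$ (by continuity and compactness).

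Finally, the exponential bounds on $\supnorm{VA_{t}}$ and $\supnorm{XB_{t}}$ follow from direct differentiation of the formulas $A_{t} = 1/a_{t}$ and $B_{t} = -b_{t}/a_{t}$: the resulting expressions involve $Va_{t}$, $Vb_{t}$, $Xa_{t}$, $Xb_{t}$, each of which contains the second-order variation $D^{2}\flo{t}$. Standard variational-equation bounds give $\supnorm{D^{2}\flo{t}}\leq C e^{\alpha t}$ for some $\alpha$, and combined with the lower bound $a_{t}\geq C^{-1}e^{\lambda t}$, the desired bounds are read off after enlarging $\alpha$. The only genuinely delicate step is the uniform control of $B_{t}$; everything else is bookkeeping with the chain rule and exponential estimates.
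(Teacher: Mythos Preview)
Your proof is correct and follows essentially the same approach as the paper: both identify $A_{t},B_{t}$ as entries of $(D\flo{t})^{-1}$ in the frame $\{V,X\}$, obtain $\supnorm{A_{t}}\leq Ce^{-\lambda t}$ from the expansion hypothesis, and bound $B_{t}$ via the cocycle structure of $D\flo{t}$. The only cosmetic difference is that the paper writes $(D\flo{t})^{-1}$ as a product of $n$ small-step matrices and reads off $B_{t}$ as an explicit geometric sum, whereas you fix one large step $T_{0}$ and iterate the recursion $\supnorm{B_{(n+1)T_{0}}}\leq \supnorm{B_{T_{0}}}+\tfrac12\supnorm{B_{nT_{0}}}$; for the derivative bounds the paper simply asserts at-most-exponential growth from the product formula, while you spell out that this comes from $D^{2}\flo{t}$.
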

\begin{proof}
The first line of \eqref{eq:splitting} is nothing more than the observation that $X$ is the vector field associated to the flow $\flo{t}$ and so is invariant under the action of the flow.
Since the vector fields $V$ and $X$ are transversal it is always possible to write $V\eta \circ \flo{t}$ of the form given in the second line of \eqref{eq:splitting}.  And since $\flo{t}:\Oo\to \Oo$ is $\cC^{2}$ we know that $A_{t}$ and $B_{t}$ are $\cC^{1}$ on the set $ \Oo \setminus (\flo{t})^{-1}\partial \Oo$. Fixing $p\in \Oo$ and using the vector fields $V$ and $X$ (respectively) as a basis for tangent space at that point we can write 
$D_{p}\flo{t} = \left(\begin{smallmatrix}\alpha_t & 0 \\\beta_t & 1\end{smallmatrix}\right)$
and by the uniform expansion assumption we know that $\abs{\alpha_{t}} \geq C^{-1}e^{\lambda t}$. Taking the inverse of the matrix we have that $A_{t}(p) = 1/\alpha_{t}$ and so $\supnorm{A_{t}} \leq C e^{-\lambda t}$ as required.
We continue to use the vector fields $V$ and $X$ respectively as a basis for tangent space. Let $t>0$, $n\in \bN$, $\tau:= t / n$, and  $p_{j}:= \flo{j \tau}p$ for all $t\in \{0,1,\ldots,n\}$.  We may write
%\[\begin{split}
%\left( D_{p}\flo{t}  \right)^{-1}
%&=  \left(\begin{array}{cc}A_t & 0 \\ B_t & 1\end{array}\right)(p)\\
%&=  \left(\begin{array}{cc}A_\tau & 0 \\ B_\tau & 1\end{array}\right)(p_{n-1})
% \left(\begin{array}{cc}A_\tau & 0 \\ B_\tau & 1\end{array}\right)(p_{n-2})
% \cdots
%  \left(\begin{array}{cc}A_\tau & 0 \\ B_\tau & 1\end{array}\right)(p).
%  \end{split}
%  \]
\begin{equation}\label{eq:jack}
\begin{split}
\left( D_{p}\flo{t}  \right)^{-1}
&=  \left(\begin{smallmatrix}A_t & 0 \\ B_t & 1\end{smallmatrix}\right)(p)\\
&=  \left(\begin{smallmatrix}A_\tau & 0 \\ B_\tau & 1\end{smallmatrix}\right)(p_{n-1})
 \left(\begin{smallmatrix}A_\tau & 0 \\ B_\tau & 1\end{smallmatrix}\right)(p_{n-2})
 \cdots
  \left(\begin{smallmatrix}A_\tau & 0 \\ B_\tau & 1\end{smallmatrix}\right)(p).
  \end{split}
  \end{equation}
  The idea is that for any $t>0$ we will always choose $n\in \bN$ such that $\tau \in (0,1)$.
  The above product of matrices formula means that
%  \begin{equation}\label{eq:At}
%    A_{t}(p)= A_{\tau}(p)A_{\tau}(p_{1})\cdots A_{\tau}(p_{n-1}),
%\end{equation}
%and that
  \begin{equation}\label{eq:Bt}
  \begin{split}
   B_t(p) &= B_{\tau}(p)\\
   & \ \ + B_{\tau}(p_{1})A_{\tau}(p)\\
   %& \ \    + B_{\tau}(p_{2})A_{\tau}(p_{1})A_{\tau}(p)\\
   & \ \    +\ldots + \ldots \\
   & \ \
   + B_{\tau}(p_{n-1})A_{\tau}(P_{n-1})\cdots A_{\tau}(p_{1})A_{\tau}(p).
   \end{split}
\end{equation}
Combined with the already proven estimate  $\supnorm{A_{t}} \leq C e^{-\lambda t}$  the above geometric sum gives the uniform (in $t$) bound for $\supnorm{B_{t}}$. We increase the value of $C$ as required so that $\supnorm{B_{t}}\leq C$ for all $t\geq 0$.
From \eqref{eq:jack} we know that the quantities $\supnorm{VA_{t}}$ and  
$\supnorm{XB_{t}}$ cannot grow faster than some exponential rate and so we may choose some $\alpha<\infty$ as required by the statement of the lemma.
\end{proof}

\begin{rem}\label{rem:markov}
Historically the Markov property was important in the study of dynamical systems.
We never required any such property for the flow studied in this work. 
In the branched manifold setting we say that $\flo{t}:\Oo \to \Oo$ is Markov if $\flo{t}\partial \Oo \subset \partial \Oo$ or more generally that there exists some zero measure set $S \supset \partial \Oo$ such that $\flo{t}S \subset S$.  However we may always increase the boundary of the branched manifold by adding any piece of flowline without changing any of the properties of the flow. Therefore without loss of generality we may always consider the first of the above statements.   
Without the Markov property the technical problem  is that $\eta\in \cC_{0}(\Oo)$ does not imply that $\eta \circ \flo{t}\in \cC_{0}(\Oo)$ since there is now no reason to expect $\eta\circ\flo{t}(p) = 0$ for all $p\in \partial \Oo$. This is a reason why we must use bounded variation type norms in the present setting and not $\cC^{1}$ type norms.
\end{rem}

Shortly we will require the following lemma.
\begin{lem}\label{lem:zeroed}
Suppose that  $\eta\in \cC^{1}(\Oo)$, $\supnorm{\eta}\leq 1$, $\mu\in \dmeasures$ and $\epsilon>0$. Then there exists $\tilde \eta \in \cC^{1}_{0}(\Oo)$ such that $\supnorm{\tilde\eta}\leq 1$ and $\abs{\mu(X[\eta-\tilde\eta])}\leq \epsilon$.
\end{lem}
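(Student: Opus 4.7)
The plan is to take $\tilde\eta := \eta\,\psi_\delta$, where $\psi_\delta\in\cC^1_0(\Oo)$ is a smooth cutoff that vanishes near $\partial\Oo$ and converges to $1$ in the interior as $\delta\downarrow 0$. Specifically, writing $d(p):=\operatorname{dist}(p,\partial\Oo)$ (smooth in a collar of $\partial\Oo$) and picking a smooth $\chi:[0,\infty)\to[0,1]$ with $\chi\equiv 0$ near $0$ and $\chi\equiv 1$ on $[1,\infty)$, I set $\psi_\delta(p):=\chi(d(p)/\delta)$. Then $\tilde\eta\in\cC^1_0(\Oo)$ and $\supnorm{\tilde\eta}\leq\supnorm{\eta}\supnorm{\psi_\delta}\leq 1$ as required. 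By the Leibniz rule,
\[
X[\eta-\tilde\eta] \;=\; (1-\psi_\delta)X\eta \;-\; \eta\,X\psi_\delta,
\]
and both terms are supported in the thin collar $\{d\leq\delta\}$.

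For the first term the bound $|(1-\psi_\delta)X\eta|\leq\supnorm{X\eta}$ is immediate. The real work is to bound $\supnorm{X\psi_\delta}$ uniformly in $\delta$, since a generic cutoff produces $|\nabla\psi_\delta|\sim 1/\delta$. Here I exploit the standing hypothesis (noted in the discussion of the expanding assumption) that flow lines at the boundary are tangent to $\partial\Oo$, so $X\cdot\nabla d\equiv 0$ on $\partial\Oo$ and hence $X\cdot\nabla d = O(d)$ in a collar by smoothness of $X$. Since $\nabla\psi_\delta=\delta^{-1}\chi'(d/\delta)\nabla d$, this yields $|X\psi_\delta|=O((d/\delta)\,|\chi'(d/\delta)|)\leq C$ uniformly in $\delta$, the large transverse gradient being precisely cancelled by the tangency of $X$.

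Next, any measure in $\dmeasures$ must be absolutely continuous with respect to $\m$: if $\mu$ had mass on a lower-dimensional subset of $\Oo$, differentiating in a direction transverse to that singular support would yield a distribution rather than a finite measure, contradicting $\mu\in\dmeasures$. Hence $|\mu|(\partial\Oo)=0$, and continuity from above of the finite measure $|\mu|$ gives $|\mu|(\{d\leq\delta\})\to 0$ as $\delta\downarrow 0$. Combining,
\[
|\mu(X[\eta-\tilde\eta])|\;\leq\;(\supnorm{X\eta}+C)\,|\mu|(\{d\leq\delta\})\;\leq\;\epsilon
\]
for $\delta$ sufficiently small. The single non-routine step is the uniform bound $\supnorm{X\psi_\delta}\leq C$: without tangency of $X$ to $\partial\Oo$, the product $|X\psi_\delta|\cdot|\mu|(\{d\leq\delta\})$ need not vanish and the cutoff strategy would break; once that bound is in hand everything else is standard measure-theoretic continuity.
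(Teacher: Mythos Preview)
Your proof is correct and takes essentially the same approach as the paper: multiply $\eta$ by a cutoff vanishing on $\partial\Oo$, use tangency of $X$ to $\partial\Oo$ to bound $X$ of the cutoff uniformly in $\delta$, and then invoke the $\mathbf{L}^1$ density of $\mu\in\dmeasures$ so that the contribution from the shrinking collar tends to zero. The only difference is in how the cutoff is built: the paper uses distance along the leaves of the transverse foliation $\cF_V$ and thickens the resulting strip by flowing for a fixed time $\Delta$ (simply asserting $\supnorm{X\omega_\delta}\leq C$), whereas you use the ordinary distance $d$ and derive $Xd=O(d)$ directly from $Xd|_{\partial\Oo}=0$ plus smoothness---a cosmetic variation that makes the uniform bound more explicit.
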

\begin{proof} 
If $\partial \Oo = \emptyset$ we may take $\tilde\eta = \eta$.
Otherwise this lemma is a consequence of $X$ being tangent to $\partial \Oo$.
Let $\cF_{V}\!\operatorname{-dist}$ denote distance on $\Oo$ restricted to the leaves of $\cF_{V}$.
Let $\Delta>0$ and $\delta_{0}>0$ be sufficiently small, to be chosen later.
For all $\delta \in (0,\delta_{0})$ let
\[
S_{\delta} := \{p\in \Oo : \cF_{V}\!\operatorname{-dist}(p,\partial\Oo) \leq \delta\}.
\]
And let
\[
S_{\delta,\Delta}:= S_{\delta} \cup \{p\in \Oo:\flo{t}p\in S_{\delta}, t\in[0,\Delta]\}
\cup \{\flo{t}p: p\in S_{\delta}, t\in[0,\Delta]\}.
\]
Furthermore let $\omega_{\delta}:\Oo \to [0,1]$ be such that $\omega_{\delta}\in \cC_{0}^{1}(\Oo)$ (in particular $\omega_{\delta}(p)=0$ for all $p\in \partial\Oo$),  $\omega_{\delta}(p)=1$ for all $p\in\Oo\setminus S_{\delta,\Delta}$ and that there exists $C<\infty$ such that $\supnorm{X\omega_{\delta}}\leq C$ for all $\delta\in [0,\delta_{0}]$. 

Let $\eta_{\delta}:= \omega_{\delta}\cdot \eta \in \cC_{0}^{1}(\Oo)$. 
We must estimate $\abs{ \mu(X(\eta - \eta_{\delta}))  } = \abs{ \mu(X([1-\omega_{\delta}]\eta))  } $.
For all $ \delta \in (0,\delta_{0}) $ we have $\abs{X([1-\omega_{\delta}]\eta)} \leq C+\abs{X\eta}$ and also the support of $X([1-\omega_{\delta}]\eta)$ is contained within $S_{\epsilon,\delta}$. Since, as noted before, $\mu\in \dmeasures$ has densities of bounded variation and in particular the densities are $\mathbf{L^{1}}$ this means that $ \abs{ \mu(X([1-\omega_{\delta}]\eta))  } \to 0$ as $\delta \to 0$.
\end{proof}

\begin{lem}\label{lem:Xest}
For all $\mu\in \dmeasures$ and $t\geq 0$
\[
\TotalVar{\D_{X}(\flo{t}_{*}\mu)} \leq \TotalVar{\D_{X}\mu}.
\]
\end{lem}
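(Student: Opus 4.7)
The plan is to compute $\D_{X}(\flo{t}_{*}\mu)(\eta)$ directly from the definitions, use the first identity of Lemma~\ref{lem:split} to transfer $X$ across the flow, and then handle the resulting boundary issue with Lemma~\ref{lem:zeroed}. Fix $\eta \in \cC^{1}_{0}(\Oo)$ with $\supnorm{\eta}\leq 1$. By the definition of the derivative of a measure together with the definition of $\flo{t}_{*}$,
\[
\D_{X}(\flo{t}_{*}\mu)(\eta) = -\flo{t}_{*}\mu(X\eta) = -\mu(X\eta \circ \flo{t}).
\]
Then by Lemma~\ref{lem:split}, $X\eta \circ \flo{t} = X(\eta \circ \flo{t})$, and so
\[
\D_{X}(\flo{t}_{*}\mu)(\eta) = -\mu\bigl(X(\eta \circ \flo{t})\bigr).
\]

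If the flow were Markov (in the sense of Remark~\ref{rem:markov}) then $\eta \circ \flo{t}$ would belong to $\cC^{1}_{0}(\Oo)$ and we could immediately identify the right-hand side with $\D_{X}\mu(\eta \circ \flo{t})$, at which point the inequality $\supnorm{\eta \circ \flo{t}} \leq \supnorm{\eta} \leq 1$ would finish the proof. In general, however, $\eta \circ \flo{t}$ is merely an element of $\cC^{1}(\Oo)$ since there is no reason for it to vanish on $\partial\Oo$, so we cannot test $\D_{X}\mu$ against it directly. This is exactly the obstacle that Lemma~\ref{lem:zeroed} is designed to overcome.

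Given $\epsilon > 0$, apply Lemma~\ref{lem:zeroed} to the function $\eta \circ \flo{t} \in \cC^{1}(\Oo)$, which satisfies $\supnorm{\eta \circ \flo{t}} \leq 1$, to obtain $\tilde{\eta} \in \cC^{1}_{0}(\Oo)$ with $\supnorm{\tilde{\eta}} \leq 1$ and
\[
\bigl|\mu\bigl(X[\eta \circ \flo{t} - \tilde{\eta}]\bigr)\bigr| \leq \epsilon.
\]
Combining this with the previous identity and using the definition of $\D_{X}\mu$ applied to the genuinely compactly supported test function $\tilde{\eta}$,
\[
\bigl|\D_{X}(\flo{t}_{*}\mu)(\eta)\bigr| \leq \bigl|\mu(X\tilde{\eta})\bigr| + \epsilon = \bigl|\D_{X}\mu(\tilde{\eta})\bigr| + \epsilon \leq \TotalVar{\D_{X}\mu} + \epsilon.
\]

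Since $\epsilon > 0$ was arbitrary, we conclude that $|\D_{X}(\flo{t}_{*}\mu)(\eta)| \leq \TotalVar{\D_{X}\mu}$ for every admissible $\eta$. Taking the supremum over $\eta \in \cC^{1}_{0}(\Oo)$ with $\supnorm{\eta}\leq 1$ yields the claimed bound $\TotalVar{\D_{X}(\flo{t}_{*}\mu)} \leq \TotalVar{\D_{X}\mu}$. The main (and only) subtle point is the non-Markov step addressed by Lemma~\ref{lem:zeroed}; the rest is a clean unwinding of definitions combined with the flow-invariance of $X$.
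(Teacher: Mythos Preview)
Your proof is correct and follows essentially the same approach as the paper's: compute $\D_X(\flo{t}_*\mu)(\eta)$ using the flow-invariance of $X$ from Lemma~\ref{lem:split}, then invoke Lemma~\ref{lem:zeroed} to handle the non-Markov boundary issue. Your version is in fact more explicit than the paper's (which stops after citing Lemma~\ref{lem:zeroed}), and you also keep track of the sign from the definition of $\D_X\mu$ that the paper's write-up silently drops.
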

\begin{proof}
Fix $\mu\in \dmeasures$ and $\eta\in \cC_{0}^{1}(\Oo)$. By Lemma~\ref{lem:split} we have that for all $t\geq0$
\[
\D_{X}(\flo{t}_{*}\mu)(\eta) = \mu(X\eta\circ \flo{t}) = \mu(X(\eta\circ\flo{t})).
\]
The problem  is that $\eta\in \cC_{0}^{1}(\Oo)$ does not imply that $\eta \circ \flo{t}\in \cC_{0}^{1}(\Oo)$ since we do not require the Markov property as discussed in Remark~\ref{rem:markov}. However by Lemma~\ref{lem:zeroed}, for all $\epsilon>0$ there exists $\zeta \in \cC^{1}_{0}(\Oo)$ such that $\abs{ \mu(X(\zeta - \eta\circ \flo{t}))  }\leq \epsilon$.
\end{proof}

\begin{lem}\label{lem:linearpart}
 There exists $C<\infty$ such that for all  $t\geq0$ and $\eta\in \cC^{1}_{0}(\Oo)$ with $\supnorm{\eta}\leq 1$ there exists $\Psi_{\eta,t}\in \cC^{1}(\Oo)$    such that $(\eta\circ \flo{t} - \Psi_{\eta,t}) \in \cC_{0}(\Oo)$, $\supnorm{\Psi_{\eta,t}}\leq 1$ and  $\supnorm{V\Psi_{\eta,t}}\leq C$.
\end{lem}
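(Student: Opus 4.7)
The plan is to construct $\Psi_{\eta,t}$ by linearly interpolating, along each leaf of the foliation $\cF_V$, between the two boundary values of $\eta\circ\flo{t}$ at the endpoints of that leaf. Fix a smooth orientation of $\cF_V$; for each $p\in\Oo$ write $q_-(p),q_+(p)\in\partial\Oo$ for the two oriented endpoints of the leaf through $p$, $L(p)$ for the length of that leaf (so $L(p)\geq\delta$ by hypothesis), and $s(p)\in[0,L(p)]$ for the arc-length parameter of $p$ along that leaf. Since $\cF_V$ is a $\cC^2$ foliation, $q_\pm$ and $L$ are $\cC^1$ on $\Oo$ and constant along leaves, while $s$ is $\cC^1$ with $Vs\equiv 1$, and by construction $VL\equiv Vq_\pm\equiv 0$. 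Then set
\[
\Psi_{\eta,t}(p):=\Bigl(1-\tfrac{s(p)}{L(p)}\Bigr)\,\eta(\flo{t}(q_-(p)))+\tfrac{s(p)}{L(p)}\,\eta(\flo{t}(q_+(p))).
\]

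The $\cC^1$ regularity of $\Psi_{\eta,t}$ is immediate from that of $q_\pm,L,s$ together with $\eta\circ\flo{t}\in\cC^1(\Oo)$. Being a convex combination of two values of $\eta$, $\supnorm{\Psi_{\eta,t}}\leq\supnorm{\eta}\leq 1$. The chain rule, applied to the displayed formula together with $Vs\equiv 1$ and $VL\equiv Vq_\pm\equiv 0$, produces the clean expression
\[
V\Psi_{\eta,t}(p)=\tfrac{1}{L(p)}\bigl[\eta(\flo{t}(q_+(p)))-\eta(\flo{t}(q_-(p)))\bigr],
\]
from which $\supnorm{V\Psi_{\eta,t}}\leq 2\supnorm{\eta}/\delta\leq 2/\delta$, yielding the desired uniform constant $C:=2/\delta$.

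To verify that $\eta\circ\flo{t}-\Psi_{\eta,t}\in\cC_0(\Oo)$, observe that any $p\in\partial\Oo$ is itself one of the two endpoints of the leaf through $p$; say $p=q_-(p)$ (the other case is symmetric). Then $s(p)=0$ and $\Psi_{\eta,t}(p)=\eta(\flo{t}(q_-(p)))=(\eta\circ\flo{t})(p)$. Thus the difference $\eta\circ\flo{t}-\Psi_{\eta,t}$ vanishes on $\partial\Oo$, which, in the sense discussed in Remark~\ref{rem:markov}, places it in $\cC_0(\Oo)$.

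The most delicate step is the $\cC^1$ regularity of the leaf-coordinate maps $q_\pm,s,L$ globally on $\Oo$, in particular at boundary points where leaves terminate and across any branch structure. The hypotheses that $\cF_V$ is a $\cC^2$ foliation uniformly transversal to the flow (which, where the flow is tangent to $\partial\Oo$, forces $V$ to be transversal to $\partial\Oo$), together with the uniform lower bound $\delta$ on leaf lengths, are precisely what rule out these degeneracies and what make the constant $C=2/\delta$ independent of $t$ and of the particular $\eta$ chosen.
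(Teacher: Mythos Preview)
Your proposal is correct and follows essentially the same approach as the paper: define $\Psi_{\eta,t}$ to equal $\eta\circ\flo{t}$ on $\partial\Oo$ and to be linear (in arc length) along each leaf of $\cF_V$, and then use the uniform lower bound $\delta$ on leaf length to control $\supnorm{V\Psi_{\eta,t}}$. The paper's proof is a three-sentence sketch of exactly this construction; you have simply written out the interpolation formula explicitly and extracted the constant $C=2/\delta$.
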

\begin{proof}
Recall that by assumption there exists the foliation $\cF_{V}$ whose leaves are all curves of length at least $\delta>0$ and with end points contained within $\partial\Oo$. We therefore define $\Psi_{\eta,t}$ to be equal to $\eta \circ \flo{t} $ on $\partial\Oo$ and linear along the leaves of $\cF_{V}$. The uniform minimum length of these curves gives the uniform bound for $\supnorm{V\Psi_{\eta,t}}$.
\end{proof}

Recall the quantity $\lambda>0$ given by the uniform expansion assumption and the quantity $\alpha<\infty$ given by Lemma~\ref{lem:split}.
\begin{lem}\label{lem:Vest}
 There exists $C<\infty$ such that, for all $\mu\in \dmeasures$ and $t\geq 0$
\[
\TotalVar{\D_{V}(\flo{t}_{*}\mu)} \leq Ce^{-\lambda t}\norm{D_{V}\mu}_{\measures}
+ C\norm{D_{X}\mu}_{\measures} + Ce^{\alpha t}\norm{\mu}_{\measures}.
\]
\end{lem}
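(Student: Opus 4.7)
The strategy is to mirror the proof of Lemma~\ref{lem:Xest}, but now using the full splitting of Lemma~\ref{lem:split} to separate the expanding direction from the flow direction. For $\phi\in\cC^{1}_{0}(\Oo)$ with $\supnorm{\phi}\leq 1$, unpacking the definition gives $\D_{V}(\flo{t}_{*}\mu)(\phi) = -\mu(V\phi\circ\flo{t})$, and Lemma~\ref{lem:split} rewrites this as
\[
-\mu\bigl(A_{t}\, V(\phi\circ\flo{t})\bigr) \;-\; \mu\bigl(B_{t}\, X(\phi\circ\flo{t})\bigr).
\]
The Leibniz rule, applied to each summand, transports the derivative off $\phi\circ\flo{t}$ and onto $A_{t}$ or $B_{t}$, yielding main pieces of the form $\D_{V}\mu(A_{t}\phi\circ\flo{t}) + \D_{X}\mu(B_{t}\phi\circ\flo{t})$ together with correction pieces $\mu\bigl((VA_{t}+XB_{t})(\phi\circ\flo{t})\bigr)$. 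Each of the three summands in the statement then matches a bound from Lemma~\ref{lem:split}: $\supnorm{A_{t}}\leq Ce^{-\lambda t}$ produces the $Ce^{-\lambda t}\TotalVar{\D_{V}\mu}$ term, $\supnorm{B_{t}}\leq C$ produces the $C\TotalVar{\D_{X}\mu}$ term, and $\supnorm{VA_{t}},\supnorm{XB_{t}}\leq Ce^{\alpha t}$ together produce the $Ce^{\alpha t}\TotalVar{\mu}$ term.

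The central obstacle is exactly that flagged in Remark~\ref{rem:markov}: $\phi\circ\flo{t}$ need not vanish on $\partial\Oo$, so neither $A_{t}\phi\circ\flo{t}$ nor $B_{t}\phi\circ\flo{t}$ is a priori a licit $\cC^{1}_{0}$ test function for $\D_{V}\mu$ or $\D_{X}\mu$, and the integration by parts above is not directly justified. For the $X$-piece I would reuse the device of Lemma~\ref{lem:Xest}, applying Lemma~\ref{lem:zeroed} (available because $X$ is tangent to $\partial\Oo$) to approximate $B_{t}\phi\circ\flo{t}$ by $\cC^{1}_{0}$ functions with arbitrarily small error in $\mu(X\cdot)$. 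For the $V$-piece the analogous approximation fails, since $V$ is transversal to the boundary; instead I would invoke Lemma~\ref{lem:linearpart} to write $\phi\circ\flo{t} = \Psi_{\phi,t} + \tilde\phi$, where $\tilde\phi$ vanishes on $\partial\Oo$ and $\supnorm{V\Psi_{\phi,t}}\leq C$. The integration by parts is then carried out against $\tilde\phi$, while the residual $\mu(A_{t}V\Psi_{\phi,t})$ is bounded directly by $\supnorm{A_{t}}\supnorm{V\Psi_{\phi,t}}\TotalVar{\mu}\leq Ce^{-\lambda t}\TotalVar{\mu}$ and absorbed into the $Ce^{\alpha t}\TotalVar{\mu}$ term of the statement.

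The delicate bookkeeping is to ensure the boundary correction does not degrade the sharp factor $e^{-\lambda t}$ in front of $\TotalVar{\D_{V}\mu}$; Lemma~\ref{lem:linearpart} is engineered precisely so that $\supnorm{V\Psi_{\phi,t}}$ is uniform in $t$, and consequently the boundary residue contributes only to the (already present) $\TotalVar{\mu}$ piece. Taking the supremum over admissible $\phi$ then yields the claimed bound.
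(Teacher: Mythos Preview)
Your proposal is correct and follows essentially the same approach as the paper: split via Lemma~\ref{lem:split}, handle the $X$-piece with Lemma~\ref{lem:zeroed} exactly as in Lemma~\ref{lem:Xest}, and handle the $V$-piece by subtracting the boundary interpolant $\Psi_{\phi,t}$ of Lemma~\ref{lem:linearpart} so that the remaining test function lies in $\cC_{0}$ and the Leibniz/integration-by-parts step is licit. Your bookkeeping is in fact slightly cleaner than the paper's---you isolate $\mu(A_{t}\,V\Psi_{\phi,t})$ separately and bound it by $Ce^{-\lambda t}\TotalVar{\mu}$, whereas the paper groups it with the $VA_{t}$ correction and bounds the sum by $Ce^{\alpha t}\TotalVar{\mu}$---but both are absorbed into the same final term.
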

\begin{proof}
Fix $\mu\in \dmeasures$,  $t\geq0$ and $\eta\in \cC_{0}^{1}(\Oo)$. By Lemma~\ref{lem:split} we have that 
\begin{equation}\label{eq:sally}
\left(\D_{V}(\flo{t}_{*}\mu)\right)(\eta) 
=\mu(V\eta \circ \flo{t})
=\mu(A_{t} \cdot V\left( \eta \circ \flo{t} \right)) + \mu(B_{t} \cdot X\left( \eta \circ \flo{t} \right) ).
\end{equation}
We will estimate these two term separately. First we estimate $\abs{ \smash{\mu(A_{t} \cdot V\left( \eta \circ \flo{t} \right))  }}$. Using the quantity $\Psi_{\eta,t}$ defined in  Lemma~\ref{lem:linearpart} we have
\[
\mu(A_{t} \cdot  V\left( \eta \circ \flo{t} \right)) 
= \mu( V\left( A_{t}[\eta \circ \flo{t} - \Psi_{\eta,t} ]\right)) 
+ \mu(V(A_{t} \cdot   \Psi_{\eta,t}) ) 
-\mu(VA_{t} \cdot \eta \circ \flo{t} ) 
\]
Note that $A_{t}[\eta \circ \flo{t} - \Psi_{\eta,t} ] \in \cC_{0}(\Oo)$ and $\abs{\smash{A_{t}[\eta \circ \flo{t} - \Psi_{\eta,t} ]}} \leq Ce^{-\lambda t}$ by Lemma~\ref{lem:split}. This means that $\abs{\mu( V\left( A_{t}[\eta \circ \flo{t} - \Psi_{\eta,t} ]\right)) } \leq 2  Ce^{-\lambda t} \norm{D_{V}\mu}_{\measures}$. The second and third terms are bounded by $C e^{\alpha t}\norm{\mu}_{\measures}$ by the estimates of Lemma~\ref{lem:split}.
This means that
\begin{equation}\label{eq:mice}
\abs{  \mu(A_{t} \cdot  V\left( \eta \circ \flo{t} \right))  }
\leq Ce^{-\lambda t} \norm{D_{V}\mu}_{\measures}
+ C e^{\alpha t}\norm{\mu}_{\measures}.
\end{equation}
Now we estimate $\abs{ \smash{\mu(B_{t} X\left( \eta \circ \flo{t} \right) ) } }  $, the second term of \eqref{eq:sally}. We observe that
$\mu(B_{t} X\left( \eta \circ \flo{t} \right) )  = \mu( X\left( B_{t} \cdot\eta \circ \flo{t} \right) ) -   \mu(X B_{t} \cdot  \eta \circ \flo{t}  ) $.
By the same reasoning as the proof of Lemma~\ref{lem:Xest}, using also Lemma~\ref{lem:zeroed}, we have that
\begin{equation}\label{eq:cat}
\abs{  \mu(B_{t} \cdot  X\left( \eta \circ \flo{t} \right))  }
\leq C \norm{D_{X}\mu}_{\measures}
+ C e^{\alpha t}\norm{\mu}_{\measures}.
\end{equation}
By \eqref{eq:sally}, the estimates of \eqref{eq:mice} and \eqref{eq:cat} complete the proof of the lemma.
\end{proof}

\begin{proof}[Proof of Proposition~\ref{prop:bounded}]
The equivalence of the norms by Lemma~\ref{lem:equivalent} and the estimates of Lemma~\ref{lem:Xest} and Lemma~\ref{lem:Vest} complete the proof of Proposition~\ref{prop:bounded}. 
\end{proof}

\section{Essential Spectral Radius of $\R{z}:\dmeasures \to \dmeasures$}
\label{sec:proofQC}

In this section we prove Proposition~\ref{prop:quasicompact}. Recall the quantity $\alpha<\infty$ which was given by Proposition~\ref{prop:bounded} and which had its origin in the estimates of Lemma~\ref{lem:split}.

\begin{lem}\label{lem:boundRz3}
For all $n\in \{2,3,\ldots\}$,  $z\in \bC$ such that $\Re(z)>\alpha$ and $\mu\in \dmeasures$
\[
\TotalVar{D_{X}\R{z}^{n}\mu} \leq  (\abs{z} + \Re(z)) \  \Re(z)^{-n} \TotalVar{\mu}. 
\] 
\end{lem}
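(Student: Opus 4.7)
The plan is to express $\R{z}^{n}$ as a Laplace-type integral transform of the semiflow via Lemma~\ref{lem:Rzn}, and then to integrate by parts in the time variable. Since $X$ is the infinitesimal generator of the flow, this integration by parts trades the spatial derivative $D_{X}$ on the outside for multiplication by $z$ and $(n-1)$ in the Laplace domain; this is precisely the smoothing mechanism that produces a bound in terms of $\TotalVar{\mu}$ alone, rather than the stronger $\norm{\mu}_{\dmeasures}$.

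Concretely, I would fix a test function $\eta\in\cC^{1}_{0}(\Oo)$ with $\supnorm{\eta}\leq 1$ and use the definition of $D_{X}$ together with Lemma~\ref{lem:Rzn} to get
\[
D_{X}\R{z}^{n}\mu(\eta) = -(\R{z}^{n}\mu)(X\eta) = -\frac{1}{(n-1)!}\int_{0}^{\infty} t^{n-1} e^{-zt}\,\mu(X\eta\circ\flo{t})\,dt.
\]
Because $X$ generates the flow, the scalar function $g(t):=\flo{t}_{*}\mu(\eta)=\mu(\eta\circ\flo{t})$ is differentiable in $t$ with $g'(t)=\mu(X\eta\circ\flo{t})$; this follows from the pointwise identity $\frac{d}{ds}\eta\circ\flo{s}(p) = X\eta\circ\flo{s}(p)$ (valid since $\flo{s}(p)$ is $\cC^{1}$ in $s$) combined with Fubini and the finiteness of $\mu$, using essentially the same reasoning as in the proof of Lemma~\ref{lem:Xest}. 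Integration by parts against the weight $t^{n-1}e^{-zt}$, using $\frac{d}{dt}(t^{n-1}e^{-zt}) = [(n-1)t^{n-2}-zt^{n-1}]e^{-zt}$, then yields
\[
D_{X}\R{z}^{n}\mu(\eta) = \frac{1}{(n-1)!}\int_{0}^{\infty}\bigl[(n-1)t^{n-2} - zt^{n-1}\bigr] e^{-zt}\, g(t)\,dt,
\]
where the boundary terms vanish: at $t=0$ because $n\geq 2$ forces $t^{n-1}=0$, and at $t=\infty$ because $\Re(z)>0$ and $\abs{g(t)}\leq\TotalVar{\mu}$ by Lemma~\ref{lem:TotalVarBound}.

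The rest is a routine estimate. Splitting the two terms of the bracket by the triangle inequality, using $\abs{g(t)}\leq\TotalVar{\mu}$ and the standard identity $\int_{0}^{\infty} t^{k} e^{-\Re(z)t}\,dt = k!/\Re(z)^{k+1}$, the factorials telescope to give
\[
\abs{D_{X}\R{z}^{n}\mu(\eta)} \leq \TotalVar{\mu}\left[\frac{1}{\Re(z)^{n-1}} + \frac{\abs{z}}{\Re(z)^{n}}\right] = \frac{\abs{z}+\Re(z)}{\Re(z)^{n}}\TotalVar{\mu},
\]
and taking the supremum over admissible $\eta$ completes the proof. I do not anticipate any serious obstacle: the key observation is that the assumption $n\geq 2$ is essential because it is precisely what kills the boundary contribution at $t=0$ (for $n=1$ one would pick up an extra $\mu(\eta)$ term, and indeed no such gain over $\TotalVar{\R{z}}$ is available at the level of a single resolvent). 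The only thing to be careful about is the justification of $g'(t)=\mu(X\eta\circ\flo{t})$, which is standard once one uses that the flow is $\cC^{2}$ and $\mu$ is a finite measure.
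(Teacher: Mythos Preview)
Your proposal is correct and follows essentially the same route as the paper: both use the formula of Lemma~\ref{lem:Rzn}, the identity $X\eta\circ\flo{t}=\tfrac{d}{dt}(\eta\circ\flo{t})$ coming from $X$ generating the flow (the paper cites Lemma~\ref{lem:split} for this), integration by parts in $t$ with vanishing boundary terms (using $n\geq 2$ at $t=0$), and the Gamma-integral evaluation. The only cosmetic difference is that the paper performs the integration by parts pointwise inside $\mu(\cdot)$ after a Fubini swap, whereas you first pass to the scalar function $g(t)=\mu(\eta\circ\flo{t})$ and integrate by parts there; these are equivalent.
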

\begin{proof}
Fix $\eta \in \cC_{0}^{1}(\Oo)$ such that $\supnorm{\eta} \leq 1$ and $z\in \bC$ such that $\Re(z)>\alpha$.
Using the formula from Lemma~\ref{lem:Rzn} we have 
\begin{equation}\label{eq:alice}
\begin{split}
D_{X}\R{z}^{n}\mu(\eta) &=   \frac{ -1 }{(n-1)!} \int_{0}^{\infty}  { t^{n-1}e^{-zt}} \mu(X\eta \circ \flo{t} ) \ dt \\
&=  \frac{ -1 }{(n-1)!}  \mu\left( \int_{0}^{\infty}  { t^{n-1}e^{-zt}}  \ X\eta \circ \flo{t} \ dt \right). 
\end{split}
\end{equation}
Since $X\eta \circ \flo{t} = X\left( \eta \circ \flo{t} \right) = \frac{d}{dt}(\eta \circ \flo{t}) $ by Lemma~\ref{lem:split} 
and integrating by parts
\[
 \int_{0}^{\infty}   t^{n-1}e^{-zt}  \ X\eta \circ \flo{t} \ dt 
 = - \int_{0}^{\infty}   \frac{d}{dt}(  t^{n-1} e^{-zt})   \ \eta \circ \flo{t} \ dt.
\]
There are no boundary terms in the integration by parts since for each $p\in \Oo$ the map $t\mapsto \eta\circ \flo{t}(p)$ is continuous and $t^{n-1}e^{-zt} \to 0$ as $t\to0$ and as $t\to \infty$. Substituting the above into \eqref{eq:alice} and noting that $\abs{\mu(  \eta \circ \flo{t} )} \leq \TotalVar{\mu}$ as discussed in the proof of Lemma~\ref{lem:TotalVarBound} we have
\[
\begin{split}
\abs{D_{X}\R{z}^{n}\mu(\eta)}
& \leq   \frac{ 1}{(n-1)!}  \int_{0}^{\infty}  \abs{ \frac{d}{dt}(  t^{n-1} e^{-zt})}   \ \abs{\mu( \eta \circ \flo{t})} \ dt\\
& \leq   \frac{ 1}{(n-1)!}  \left( \int_{0}^{\infty}  \abs{ \frac{d}{dt}(  t^{n-1} e^{-zt})}   \ dt \right) \  \TotalVar{\mu}.
\end{split}
\]
It remains to calculate the integral. 
Since $  \tfrac{d}{dt} \left(t^{n-1}e^{-zt}  \right)  = (n-1)t^{n-2}e^{-zt}- zt^{n-1}e^{-zt} $
then $  \abs{ \frac{d}{dt}(  t^{n-1} e^{-zt})}   \leq  (n-1)t^{n-2}e^{-\Re(z)t} +  \abs{z}t^{n-1}e^{-\Re(z)t}$.
 For each $m\in \{1,2,\ldots\}$ and $a>0$ then $\int_{0}^{\infty} t^{m-1} e^{-a t} \ dt = (m-1)! \ a^{-m}$ and so 
\begin{equation}\label{eq:nina}
 \frac{ 1}{(n-1)!}  \int_{0}^{\infty}  \abs{ \frac{d}{dt}(  t^{n-1} e^{-zt})}   \ dt
 \leq  (\abs{z} + \Re(z)) \  \Re(z)^{-n}. 
 \end{equation}
\end{proof}

\begin{lem}\label{lem:boundRz2}
There exists $C<\infty$ such that for all $n\in \{2,3,\ldots\}$
%\begin{multline*}
\[\begin{split}
\TotalVar{D_{V}\R{z}^{n}\mu} &\leq C (\Re(z)+\lambda)^{-n}\TotalVar{D_{V}\mu}\\ 
& \ \  + C (\Re(z)-\alpha)^{-n}\TotalVar{\mu} + C(\abs{z}+\Re(z))\Re(z)^{-n}   \TotalVar{\mu}.
\end{split}\]
 for all $z\in \bC$ such that $\Re(z)>\alpha$ and $\mu\in \dmeasures$. 
\end{lem}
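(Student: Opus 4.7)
The plan is to mimic the strategy of Lemma~\ref{lem:boundRz3}, substituting the more delicate splitting of $V\eta \circ \flo{t}$ supplied by Lemma~\ref{lem:split}. Fix $\eta \in \cC_0^1(\Oo)$ with $\supnorm{\eta} \leq 1$. I first invoke Lemma~\ref{lem:Rzn} to write
\[
D_V \R{z}^n \mu(\eta) = \frac{-1}{(n-1)!} \int_0^\infty t^{n-1} e^{-zt} \mu(V\eta \circ \flo{t}) \, dt,
\]
and then appeal to Lemma~\ref{lem:split} to decompose $V\eta \circ \flo{t} = A_t V(\eta \circ \flo{t}) + B_t X(\eta \circ \flo{t})$. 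The two resulting time integrals are estimated separately.

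For the $A_t$ piece, the pointwise-in-$t$ bound
\[
\abs{\mu(A_t V(\eta \circ \flo{t}))} \leq Ce^{-\lambda t}\TotalVar{D_V\mu} + Ce^{\alpha t}\TotalVar{\mu}
\]
is already established in the proof of Lemma~\ref{lem:Vest} (equation~\eqref{eq:mice}) via the auxiliary function $\Psi_{\eta,t}$ of Lemma~\ref{lem:linearpart}. Multiplying by $t^{n-1} e^{-\Re(z) t}/(n-1)!$ and integrating, using the elementary identity $\int_0^\infty t^{n-1}e^{-at}\,dt = (n-1)!\, a^{-n}$ for $a = \Re(z)+\lambda$ and $a = \Re(z)-\alpha$ (both positive since $\Re(z) > \alpha$), delivers the first two summands of the target estimate.

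The $B_t$ piece is handled in the spirit of Lemma~\ref{lem:boundRz3}. Using $X(\eta \circ \flo{t}) = \tfrac{d}{dt}(\eta \circ \flo{t})$ from Lemma~\ref{lem:split}, swapping $\mu$ with the $t$-integral by Fubini, and integrating by parts in $t$ (the boundary terms at $0$ and $\infty$ vanish, because $t^{n-1}$ kills the former for $n\geq 2$, while $e^{-\Re(z)t}$ dominates the polynomial growth at infinity and $\supnorm{B_t}\leq C$), the relevant contribution becomes
\[
\frac{-1}{(n-1)!} \int_0^\infty \mu\bigl(\partial_t(t^{n-1}e^{-zt} B_t) \cdot \eta \circ \flo{t}\bigr) \, dt.
\]
Splitting $\partial_t(t^{n-1}e^{-zt} B_t) = \partial_t(t^{n-1}e^{-zt}) \cdot B_t + t^{n-1}e^{-zt}\cdot \partial_t B_t$, the first summand is controlled exactly as in Lemma~\ref{lem:boundRz3} using $\supnorm{B_t}\leq C$ and the computation leading to \eqref{eq:nina}, yielding the $C(\abs{z}+\Re(z))\Re(z)^{-n}\TotalVar{\mu}$ contribution.

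The main obstacle is the second summand, which requires a sup bound on $\partial_t B_t$ that is not directly supplied by Lemma~\ref{lem:split}. To obtain it, I exploit the cocycle identity $(D_p\flo{t+s})^{-1} = (D_p\flo{s})^{-1}(D_{\flo{s}p}\flo{t})^{-1}$, which in the $(V,X)$-basis reads $B_{t+s}(p) = B_s(p) A_t(\flo{s}p) + B_t(\flo{s}p)$; differentiating at $s=0$ and using $B_0 \equiv 0$ yields $\partial_t B_t(p) = \beta(p)\, A_t(p) + XB_t(p)$, where $\beta := \partial_s B_s|_{s=0}$ is bounded. Lemma~\ref{lem:split} then gives $\supnorm{\partial_t B_t} \leq C(e^{-\lambda t} + e^{\alpha t}) \leq Ce^{\alpha t}$, which, integrated against $t^{n-1}e^{-\Re(z)t}/(n-1)!$, produces the remaining $C(\Re(z)-\alpha)^{-n}\TotalVar{\mu}$ term and completes the proof.
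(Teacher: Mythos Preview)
Your proof is correct and mirrors the paper's argument: both start from Lemma~\ref{lem:Rzn}, decompose via Lemma~\ref{lem:split}, handle the $A_t$-term with the auxiliary $\Psi_{\eta,t}$ of Lemma~\ref{lem:linearpart} (exactly the computation behind~\eqref{eq:mice}), and treat the $B_t$-term by integration by parts in $t$ using $X(\eta\circ\flo{t})=\tfrac{d}{dt}(\eta\circ\flo{t})$. The only cosmetic difference is that you bound $\partial_t B_t$ explicitly via the cocycle identity, whereas the paper invokes $\supnorm{XB_t}$ directly from Lemma~\ref{lem:split}; since your own computation shows $\partial_t B_t - XB_t = \beta A_t = O(e^{-\lambda t})$, the two routes coincide up to a harmless term absorbed into the stated bounds.
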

\begin{proof}
Fix $\eta \in \cC_{0}^{1}(\Oo)$ such that $\supnorm{\eta} \leq 1$ and $z\in \bC$ such that $\Re(z)>\alpha$.
Using the formula from Lemma~\ref{lem:Rzn}  we have 
\begin{equation}\label{eq:jane}
\begin{split}
D_{V}\R{z}^{n}\mu(\eta) &=  \frac{ -1}{(n-1)!}  \int_{0}^{\infty}  t^{n-1}e^{-zt} \mu(V\eta \circ \flo{t} ) \ dt \\
&=  \frac{ -1}{(n-1)!}  \mu\left( \int_{0}^{\infty} t^{n-1}e^{-zt}  \ V\eta \circ \flo{t} \ dt \right). 
\end{split}\end{equation}
Using  Lemma~\ref{lem:split} we have $ V\eta \circ \flo{t} = A_{t} \cdot V\left( \eta \circ \flo{t} \right) + B_{t}\cdot X(\eta \circ \flo{t})$ and so
\[\begin{split}
 \int_{0}^{\infty} t^{n-1}e^{-zt}  V\eta \circ \flo{t} \ dt
 &=   \int_{0}^{\infty} t^{n-1}e^{-zt}  A_{t} \cdot  V\left( \eta \circ \flo{t} \right)        \ dt\\
& \ \  + \int_{0}^{\infty}  t^{n-1}e^{-zt}  B_{t}   \cdot X(\eta \circ \flo{t})      \ dt.
\end{split}\]
For the first term of the right hand side, since $A_{t} \in \cC^{1}(\Oo)$, we use that $A_{t}  \cdot V\left( \eta \circ \flo{t} \right)   = V\left(  A_{t}   \cdot \eta \circ \flo{t} \right)   -  V A_{t}  \cdot \eta \circ \flo{t}   $. 
Notice that $X\left( \eta \circ \flo{t} \right) = \frac{d}{dt}(\eta \circ \flo{t}) $ and 
that $B_{t}   \cdot X(\eta \circ \flo{t})   =  X(B_{t}   \cdot \eta \circ \flo{t})   - X B_{t}   \cdot (\eta \circ \flo{t})$.
We use this for the second term of the right hand side and we integrate by parts, as in the proof of Lemma~\ref{lem:boundRz3}. Since for each $p\in \Oo$ the functions $t\mapsto B_{t}(p)$ and $t\mapsto \eta\circ\flo{t}(p)$ are continuous  we have pointwise on $\Oo$
\begin{equation*}
\begin{split}
\int_{0}^{\infty} { t^{n-1}e^{-zt}} B_{t} \cdot X(\eta \circ \flo{t})      \ dt
&=   \int_{0}^{\infty}  \tfrac{d}{dt} \left(t^{n-1}e^{-zt}\right) \cdot  B_{t}  \cdot   \eta \circ \flo{t}  \ dt \\
& \ \ - \int_{0}^{\infty} t^{n-1}e^{-zt} XB_{t} \cdot \eta\circ \flo{t} \ dt.
\end{split}
\end{equation*}
%Since $B_{0} \equiv 0$ and $\Re(z)>0$ then $  \left[ e^{-zt}  B_{t}  \eta \circ \flo{t}  \right]_{0}^{\infty} \equiv 0$.
There are no boundary terms in the integration by parts since $\Re(z)>\alpha$ and so $    \abs{ t^{n-1}e^{-zt}  }  \to 0$ both as $t\to 0$ and as $t\to \infty$. 
So, collecting together the above calculations, we have shown that
\begin{multline*}
\begin{split}
 \int_{0}^{\infty} { t^{n-1}e^{-zt}} V\eta \circ \flo{t} \ dt
 &=    \int_{0}^{\infty} { t^{n-1}e^{-zt}}   V\left(  A_{t} \cdot (  \eta \circ \flo{t} - \Psi_{\eta,t} )\right)    \ dt\\
 & \ \ +  \int_{0}^{\infty} { t^{n-1}e^{-zt}} \left[  V(A_{t}\cdot \Psi_{\eta,t})   - (V A_{t} + XB_{t})  \cdot \eta \circ \flo{t}\right] \ dt \\    
   & \ \ +  \int_{0}^{\infty} {   \tfrac{d}{dt} \left(t^{n-1}e^{-zt}  \right)} \cdot B_{t}   \cdot    \eta \circ \flo{t}  \ dt,
\end{split}
\end{multline*}
where we have used that quantity $\Psi_{\eta,t}$ which was defined in Lemma~\ref{lem:linearpart}.
Recalling \eqref{eq:jane} this means that 
\begin{equation}\label{eq:sparrow}
\begin{split}
\abs{D_{V}\R{z}\mu(\eta) }
&\leq \frac{1}{(n-1)!}    \int_{0}^{\infty} t^{n-1}e^{-\Re(z) t}\abs{ D_{V}\mu( A_{t} \cdot  (  \eta \circ \flo{t} - \Psi_{\eta,t} ) )  } \ dt \\
& \ \ +  \frac{1}{(n-1)!}  \int_{0}^{\infty} { t^{n-1}e^{-zt}} \abs{\mu(V(A_{t}\cdot \Psi_{\eta,t})   - (V A_{t} + XB_{t})  \cdot \eta \circ \flo{t}    )}   \ dt \\
& \ \ +  \frac{1}{(n-1)!}    \int_{0}^{\infty} \abs{   \tfrac{d}{dt} \left(t^{n-1}e^{-zt}  \right)}    \abs{  \mu (B_{t}  \cdot    \eta \circ \flo{t})   }  \ dt.
\end{split}
\end{equation}
That $(A_{t} \cdot  (  \eta \circ \flo{t} - \Psi_{\eta,t}) ) \in \cC^{1}_{0}(\Oo) $  by Lemma~\ref{lem:linearpart} and the other estimates    from Lemma~\ref{lem:split} we know that 
\[
\abs{ D_{V}\mu( A_{t} \cdot  (  \eta \circ \flo{t} - \Psi_{\eta,t} ) )  }  \leq C e^{-\lambda t} \TotalVar{D_{V}\mu}.
\] 
Furthermore 
\[
\abs{\mu(V(A_{t}\cdot \Psi_{\eta,t})   - (V A_{t} + XB_{t})  \cdot \eta \circ \flo{t}    )}  \leq C e^{\alpha t}  \TotalVar{\mu}.
\]
For the final term we have that $\abs{B_{t}  \cdot    \eta \circ \flo{t})  }\leq C$, also by  Lemma~\ref{lem:split}.
 Since $\TotalVar{  D_{V}\R{z}\mu } = \sup\left\{ \abs{D_{V}\R{z}\mu(\eta)}   :   \eta\in \cC^{1}(\Oo), \abs{\eta}_{\cC^{0}}\leq 1 \right\}$ substituting the above estimates in \eqref{eq:sparrow} and integrating, using also \eqref{eq:nina}, we obtain the estimate of the lemma.
\end{proof}

\begin{lem}\label{lem:estnorm2}
Exists $C<\infty$ such that for all  $\Re(z)>\alpha $, $n\in \{2,3,\ldots\}$ and $\mu\in \dmeasures$
\[
\norm{\R{z}^{n} \mu}_{\dmeasures} \leq C (\Re(z)+\lambda) ^{-n}  \TotalVar{D_{V}\mu} +C g_{z,n}   \TotalVar{\mu},
\]
where $g_{z,n}:= (\abs{z}+\Re(z))\Re(z)^{-n} + (\Re(z) - \alpha)^{-n}  $.
\end{lem}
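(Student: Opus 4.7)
The plan is to reduce the claim to a sum of bounds that have already been proved piece by piece. By Lemma~\ref{lem:equivalent} the norm $\norm{\cdot}_{\dmeasures}$ is equivalent to $\norm{\cdot}_{\widetilde\dmeasures}$ defined in \eqref{eq:defnormdag}, so it suffices to estimate
\[
\norm{\R{z}^{n}\mu}_{\widetilde\dmeasures} = \TotalVar{D_{V}\R{z}^{n}\mu} + \TotalVar{D_{X}\R{z}^{n}\mu} + \TotalVar{\R{z}^{n}\mu}
\]
and then pay a harmless multiplicative constant when passing back to $\norm{\cdot}_{\dmeasures}$.

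I would control the three summands in turn using estimates already in hand. Lemma~\ref{lem:boundRz2} furnishes directly a bound on $\TotalVar{D_{V}\R{z}^{n}\mu}$ which is exactly the $C(\Re(z)+\lambda)^{-n}\TotalVar{D_{V}\mu}$ term of the statement plus two contributions of the form $C(\Re(z)-\alpha)^{-n}\TotalVar{\mu}$ and $C(\abs{z}+\Re(z))\Re(z)^{-n}\TotalVar{\mu}$, both of which are subsumed in $Cg_{z,n}\TotalVar{\mu}$. Lemma~\ref{lem:boundRz3} gives $\TotalVar{D_{X}\R{z}^{n}\mu}\leq(\abs{z}+\Re(z))\Re(z)^{-n}\TotalVar{\mu}$, again inside $Cg_{z,n}\TotalVar{\mu}$. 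Finally, iterating Lemma~\ref{lem:boundRz1} yields $\TotalVar{\R{z}^{n}\mu}\leq\Re(z)^{-n}\TotalVar{\mu}\leq(\abs{z}+\Re(z))\Re(z)^{-n}\TotalVar{\mu}$, which is dominated by $Cg_{z,n}\TotalVar{\mu}$ as well.

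Adding the three estimates and invoking Lemma~\ref{lem:equivalent} to return to the $\dmeasures$-norm produces the claimed inequality, with $C$ enlarged by an absolute constant coming from the norm equivalence. No step presents a genuine obstacle; the only point requiring attention is bookkeeping, namely verifying that each of the ``extra'' $\TotalVar{\mu}$ terms, $\Re(z)^{-n}$, $(\abs{z}+\Re(z))\Re(z)^{-n}$ and $(\Re(z)-\alpha)^{-n}$, is controlled by the single quantity $g_{z,n}=(\abs{z}+\Re(z))\Re(z)^{-n}+(\Re(z)-\alpha)^{-n}$. In other words, the lemma is essentially a packaging result that collects the $D_{V}$, $D_{X}$ and total variation estimates into a single statement in the $\dmeasures$-norm suitable for the quasi-compactness argument of Section~\ref{sec:proofQC}.
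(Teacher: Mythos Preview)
Your proof is correct and follows exactly the same approach as the paper: pass to the equivalent norm $\norm{\cdot}_{\widetilde\dmeasures}$ via Lemma~\ref{lem:equivalent}, bound the three summands using Lemmas~\ref{lem:boundRz1}, \ref{lem:boundRz3}, and \ref{lem:boundRz2}, and absorb all the $\TotalVar{\mu}$ contributions into $g_{z,n}$. The only difference is that you spell out the bookkeeping explicitly, whereas the paper simply cites the three lemmas and the norm equivalence.
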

\begin{proof}
We recall that $\norm{\R{z}^{n}\mu}_{\widetilde\dmeasures} = \TotalVar{D_{X}\R{z}^{n}\mu} +  \TotalVar{D_{V}\R{z}^{n}\mu} +  \TotalVar{\R{z}^{n}\mu} $ and combine the estimates from Lemma~\ref{lem:boundRz1}, Lemma~\ref{lem:boundRz3}, Lemma~\ref{lem:boundRz2}. We then recall that by Lemma~\ref{lem:equivalent}
the norms $\norm{\cdot}_{\dmeasures} $ and $\norm{\cdot}_{\widetilde\dmeasures}$  are equivalent.
\end{proof}

\begin{lem}\label{lem:compactness}
The embedding $\dmeasures \hookrightarrow \measures$ is compact.
\end{lem}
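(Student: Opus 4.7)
The plan is to reduce to the classical Rellich--Kondrachov compactness of $BV$ in $L^{1}$ on bounded Euclidean domains by localising via the chart structure of $\Oo$. Under the standing hypotheses $\Oo$ is compact with only finitely many branches, so it suffices to work with a finite subfamily of charts throughout.

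First I fix a finite cover $\{\U_{i}\}_{i\in \Ichart}$ of $\Oo$, refined by the subcharts $\{\V{i}{j}\}_{j\in \Isubchart_{i}}$, together with a $\cC^{1}$ partition of unity $\{\rho_{ij}\}$ with $0\leq \rho_{ij}\leq 1$, $\Supp \rho_{ij}\subset \V{i}{j}$ and $\sum_{ij}\rho_{ij}\equiv 1$. Given a sequence $(\mu_{n})$ bounded in $\dmeasures$, the Leibniz rule
\[
\D_{U}(\rho_{ij}\mu_{n}) = \rho_{ij}\,\D_{U}\mu_{n} + (U\rho_{ij})\,\mu_{n},
\]
valid for every continuous vector field $U$, yields a uniform bound on $\norm{\rho_{ij}\mu_{n}}_{\dmeasures}$ depending only on $\sup_{n}\norm{\mu_{n}}_{\dmeasures}$ and on the (finitely many) partition functions.

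Second, since $\var_{i}\!\bigl|_{\V{i}{j}}$ is a homeomorphism onto a closed subset of the Euclidean ball $\D_{i}\subset \bR^{d}$, the pushforward $\nu_{n}^{(ij)} := \bigl(\var_{i}|_{\V{i}{j}}\bigr)_{*}(\rho_{ij}\mu_{n})$ is a complex Borel measure on $\bR^{d}$ supported in the fixed compact set $\var_{i}(\V{i}{j})$. The $\cC^{2}$ regularity of the chart maps together with the transformation rule for measure-derivatives translates the uniform $\dmeasures$-bound into a uniform $BV(\bR^{d})$ bound on the densities of $\nu_{n}^{(ij)}$ with respect to Lebesgue measure (that such densities exist is exactly the statement that the distributional derivative of each coordinate direction is a finite measure). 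By the classical Rellich--Kondrachov theorem, bounded subsets of $BV(\bR^{d})$ with uniformly compact support are relatively compact in $L^{1}(\bR^{d})$; hence some subsequence of $\nu_{n}^{(ij)}$ converges in total variation on $\bR^{d}$.

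Finally, since there are only finitely many pairs $(i,j)$ a diagonal argument produces one common subsequence converging in every chart. Pulling each limit back by $\bigl(\var_{i}|_{\V{i}{j}}\bigr)^{-1}$ and summing over $(i,j)$ yields a measure $\mu\in\measures$ with $\TotalVar{\mu_{n}-\mu}\to 0$, which is exactly the claimed compactness. The main obstacle throughout is bookkeeping around the branches: the chart map $\var_{i}$ is \emph{not} a homeomorphism on $\U_{i}$ as a whole, only on each subchart $\V{i}{j}$, so the partition of unity has to be refined along subcharts in order to push each piece of $\mu_{n}$ cleanly into $\bR^{d}$. Once this is done the branching plays no further role and the argument collapses to the standard Euclidean BV-compactness one.
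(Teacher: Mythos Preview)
Your argument is correct and follows the same route as the paper: represent measures in the (finitely many) charts as $BV$ densities and invoke the classical compact embedding $BV\hookrightarrow L^{1}$ on bounded Euclidean domains. The paper's proof is a two-line sketch of exactly this idea; you have supplied the localisation via a partition of unity, the pushforward to $\bR^{d}$, and the diagonal extraction that the paper leaves implicit. One small point worth tightening: the subcharts $\V{i}{j}$ are \emph{closed} subsets of $\U_{i}$, so a $\cC^{1}$ partition of unity with $\Supp\rho_{ij}\subset\V{i}{j}$ is not automatic from the open-cover version of the construction; it is cleaner to take the $\rho_{i}$ subordinate to $\{\Int\U_{i}\}$ and then, for each $i$, split $\rho_{i}\mu_{n}$ across the sheets $\V{i}{j}$ by restriction (the branch locus has measure zero, so this costs nothing in total variation and the $BV$ bound survives).
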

\begin{proof}
Any measure $\mu\in \dmeasures$ may be represented as  densities in the charts $\var_{i}(\V{i}{j}) \subset \bR^{2}$. These densities of of bounded variation.
This means that the lemma is a direct consequence of the classical result that
$\mathbf{BV}$ is compactly imbedded into $\mathbf{L^{1}}$.
\end{proof}

\begin{proof}
We follow Hennion's argument \cite{He}.
Fix $z\in \bC$ such that $\Re(z)> \alpha $ and for each $n\in \{2,3,\ldots\}$ let 
 \[
 B_{n}:= \{\R{z}^{n}\mu: \mu\in \dmeasures, \norm{\mu}_{\dmeasures}\leq 1\}
 \]
 and let $r_{n}$ denote the infimum of the $r$ such that the set $
B_{n}$
may be covered by a finite number of balls of radius $r$ (measured in the $\norm{\cdot}_{\dmeasures}$ norm). The formula of Nussbaum \cite{Nussbaum} states that 
  \begin{equation}\label{eq:nussbaum}
r_{ess}(\R{z})=\liminf_{n\to \infty}  \sqrt[n]{r_{n}}.
\end{equation}
By  Lemma~\ref{lem:compactness}
we know  that $B_{0}$ is relatively compact in the $\TotalVar{\cdot}$ norm and
therefore, for each $\epsilon >0$, there exists a finite set 
$\{G_{i}\}_{i=1}^{N_{\epsilon}}$ of  subsets of $B_{0}$ whose union covers $B_{0}$ 
and such that
\begin{equation}\label{compact1}
\TotalVar{\smash{\mu-\tilde \mu}} \leq \epsilon \quad \text{  for all $\mu,\tilde \mu \in G_{i}$}.
\end{equation}
 Notice that \( r_{n} \) can be bounded above by the supremum of the diameters of the elements of any given cover of \( B_{n} \). 
Since the union of \( \{G_{i}\}_{i=1}^{N_{\epsilon}} \) is a cover of \( B_{0} \),  then \( \{\R{z}^{n}(G_{i})\}_{i=1}^{N_{\epsilon}} \) is a cover of \( B_{n} \) and therefore it is  sufficient to obtain an upper bound for the maximum diameter of the \( P^{n}(G_{i}) \). We use 
 the estimate on $\norm{\R{z}^{n}\mu}_{\dmeasures}$ from  Lemma~\ref{lem:estnorm2}. This implies that for all $\mu,\tilde \mu \in G_{i}$ and $n\in \{2,3,\ldots\}$ then
 \begin{equation*}
\norm{ \smash{ \R{z}^{n}\mu- \R{z}^{n}\tilde \mu}}_{\dmeasures} \leq  C (\Re(z)+ \lambda)^{-n} \norm{\smash{\mu-\tilde \mu}}_{\dmeasures} + C_{z,n} \TotalVar{\smash{\mu-\tilde \mu}} . 
\end{equation*}
Substituting \eqref{compact1} we have shown that  
$r_{n}\leq  C(\Re(z) +\lambda)^{-n}  + \epsilon C_{z,n}$.
We  choose  $\epsilon =\epsilon(n)$ small enough so that $r_{n} \leq 2C(\Re(z)+ \lambda)^{-n}$.  By \eqref{eq:nussbaum} we have shown that  the essential spectral radius is not greater than $(\Re(z)+\lambda)^{-1}$. Having proved this estimate on the essential spectral radius we note that the spectral radius cannot be greater than $\Re(z)^{-1}$ otherwise there would be a contradiction with Lemma~\ref{lem:boundRz1}.
\end{proof}

%\bibliography{../../../Papers/bibliography}

%\makeatletter
%  \providecommand\@dotsep{5}
%\makeatother
%\listoftodos\relax

\end{document}